\newenvironment{enumeratei}{\begin{enumerate}[\upshape (i)]}{\end{enumerate}}
\newcommand \url [1] {\tt{#1}}
\numberwithin{equation}{section}
\theoremstyle{plain}
 \newtheorem{theorem}{Theorem}[section]
 \newtheorem{lemma}[theorem]{Lemma}
 \newtheorem{proposition}[theorem]{Proposition}
 \newtheorem{corollary}[theorem]{Corollary}
\theoremstyle{definition}
 \newtheorem{definition}[theorem]{Definition}
 \newtheorem{remark}[theorem]{Remark}
 \newtheorem{example}[theorem]{Example}
\theoremstyle{remark}
\newcommand\dslRtBxB{\textup{(4.3)}}
\newcommand \figinsgad{8} 
\newcommand \insgadlemma{4.5}
\newcommand \cloops{\textup{(4.9)}}
\newcommand \intops{\textup{(4.10)}}
\newcommand \nRlDeF{\textup{(4.11)}}
\newcommand \insgada{\textup{(4.12)}}
\newcommand \insgadd{\textup{(4.15)}}
\newcommand \figmthreethree {9} 
\newcommand \rank [1] {r(#1)}
\newcommand \keylemma{4.6}
\newcommand \primprojlemma{4.2}
\newcommand \technicallemma{4.1}
\newcommand \nablIfffcl{\textup{(4.27)}}
\newcommand \dwhTppwhjS{\textup{(4.28)}}
\newcommand \xilemma{4.7}
\newcommand \cordiakern {\kern 0.58cm}
\newcommand \bdiakern {\kern 0.1cm}
\newcommand \balpha {\boldsymbol\alpha}
\newcommand \plusskip{\kern 5 pt}
\newcommand \minusskip{\kern -8 pt}
\newcommand \secatt {\textup{\tbf{Set}}}
\newcommand \latf {\mathbf{Lat}_{5}}
\newcommand \bposets {\mathbf{Pos}_{01}^{\tsssty +}} 
\newcommand \latasdf {\mathbf{Lat}^{\textup{sd}}_{5}}
\newcommand \cat [1]{\mathbf{#1}}
\newcommand \alllat {\mathbf{Lat}_{01}}
\newcommand \celfunct {{F_{\kern-1.5pt  \tsssty{com}}}}
\newcommand \ccelfunct [1] {{F_{\kern-1.5pt  \tsssty{com}}^{#1} }}
\newcommand \posetfunct {{F_{\kern-1.5pt \tsssty{pos}}}}
\newcommand \psf [1] {{\overline #1}}
\newcommand \psv [1] {{\overline #1}{}'}
\newcommand \psnv [1] {{\overline #1}\kern 1pt'}
\newcommand \liftfunct {{E_{\tsssty{Lift}}}}
\newcommand \fofunct {{G_{\kern-1.5pt \tsssty{forg}}}}
\newcommand \compfunct {{G_{\kern-1.5pt \tsssty{prod}}}}
\newcommand \foifunct {{G_{\kern-1.5pt \tsssty{forg}}^{-1}}}
\newcommand \celtransf {{{\boldsymbol\pi}^{\tsssty{com}}}}
\newcommand \catceltransf [1] {{\boldsymbol\pi}^{\tsssty{com},#1}}
\newcommand \xceltransf [1] {{\boldsymbol\pi}_{\kern-0.5pt #1}^{\tsssty{com}}}
\newcommand \fthird {\textup{pr}^{\tsssty{(3)}}}
\newcommand \third [1] {\fthird(#1)}
\newcommand \xpceltransf [2] {{{\boldsymbol\pi}^{\tsssty{com}}_{\kern-0.5pt #1}(#2)}}
\newcommand \tautr{\boldsymbol{\kappa}}
\newcommand \xtautr [1] {\boldsymbol{\kappa}_{#1}}
\newcommand \trivcom {\vec v^{\kern 1pt\textup{triv}}}
\newcommand \obj [1] {\textup{Ob}(#1)}
\newcommand \fmor  {\textup{Mor}}
\newcommand \mor [1] {\fmor(#1)}
\newcommand \idfunct[1] {I_{#1}}
\newcommand \eidfunct[2] {I_{#1,#2}}
\newcommand \tsssty[1]{{\scriptscriptstyle{\textup{#1}}}}
\newcommand \zetaf [3] {\zeta_{#1,#2,#3}}
\newcommand \pairs [1] {{\textup{Pairs}^{\leq}(#1)}}
\newcommand \covpairs [1] {{\textup{Pairs}^{\prec}(#1)}}
\newcommand\gad {{G}}
\newcommand\agad{{\alg\gad}}
\newcommand \pupgad [1]{G^{\sst{\upsign}}_{#1}}
\newcommand \apupgad [1]{\alg G^{\sst{\upsign}}_{#1}}
\newcommand \pdngad [1]{G^{\sst{\dnsign}}_{#1}}
\newcommand \pqmgad [1]{G^{\sst{\qmsign}}_{#1}}
\newcommand \pstrgad [2]{G^{{\textup{#1}}}_{#2}}
\newcommand \apdngad [1]{\alg G^{\sst{\dnsign}}_{#1}}
\newcommand \cpq[1]{c_{#1}^{p q}}
\newcommand \dpq[1]{d_{#1}^{p q}}
\newcommand \epq{e^{p q}}
\newcommand \dcpq[1]{c^{#1}_{p q}} 
\newcommand \ddpq[1]{d^{#1}_{p q}}
\newcommand \depq{e_{p q}}
\newcommand \adbgad [1]{\alg G^{\sst{\dblesign}}_{#1}}
\newcommand \dbgad [1]{G^{\sst{\dblesign}}_{#1}}
\newcommand \pupmap [2]{g^{\sst{\upsign}}_{#1#2}}
\newcommand \pdnmap [2]{g^{\sst{\dnsign}}_{#1#2}}
\newcommand\upsign{\textup{up}}
\newcommand\dnsign{\textup{dn}}
\newcommand\dblesign{\textup{db}}
\newcommand\qmsign{\forall}
\renewcommand\rho{\varrho}
\newcommand \trup{{\scriptscriptstyle{{{\mathord{\pmb{\vartriangle}  }}}}}}
\newcommand \uins [1] {{#1}^\trup}
\newcommand \upins [4] {{#1}^\trup_{#2#3#4}}
\newcommand \lequins{\mathrel{\uins{\mathord{\leq}}}}
\newcommand \veeuins{\mathrel{\uins{\mathord{\vee}}}}
\newcommand \wedgeuins{\mathrel{\uins{\mathord{\wedge}}}}
\newcommand \Mnh {M_{4\times3}}
\newcommand\length{\textup{length}}
\newcommand \inter [1]{{#1}^{\mathord{-}ZU}}
\newcommand\dn {^{\sst{\dnsign}}}
\newcommand\up {^{\sst{\upsign}}}
\newcommand \Nla {L}
\newcommand \fcs[1]{{{#1}^\ast}}
\newcommand \acs[1]{{{#1}_\ast}}
\newcommand \fhat[1]{{{#1}^\bullet}}
\newcommand \ahat [1]{{{#1}_\bullet}}
\newcommand \leqnu {\mathrel{\leq_\nu}}
\newcommand \sst [1] {\scriptscriptstyle #1}
\newcommand \fprinc {\textup{Princ}}
\newcommand \princ[1] {\fprinc(#1)}
\newcommand \cg[2] {\textup{con}(#1,#2)}
\newcommand \congen[2] {\textup{con}_{#1}(#2)}
\newcommand \cgi[3] {\textup{con}_{#1}(#2,#3)}
\newcommand\alg [1] {{\mathcal #1}}
\newcommand \quo[2] {\textup{quo}(#1,#2)}
\newcommand \quos[1] {\textup{quo}(#1)}
\newcommand \bquos[1] {\textup{quo}\bigl(#1\bigr)}
\newcommand \Quos[1] {\textup{quo}\Bigl(#1\Bigr)}
\newcommand \iquos[2] {\textup{quo}_{#1}(#2)}
\newcommand \blokk[2] {#1/#2}
\newcommand \restrict[2] {{#1\rceil_{#2}}}
\newcommand\iideal[2]{\mathord\downarrow_{\kern-2pt #1\kern 1pt} #2}
\newcommand\ifilter[2]{\mathord\uparrow_{\kern-2pt #1\kern 1pt} #2}
\newcommand \tuple [1] {\langle #1\rangle}
\newcommand \pair [2] {\tuple{#1,#2}}
\renewcommand\emptyset{\varnothing}
\newcommand\red[1]{{\textcolor{red}{#1}}}
\newcommand \tbf [1] {\textbf{#1}} 
\newcommand \set[1] {\{#1\}}
\newcommand \then {\mathrel{\Rightarrow}} 
\newcommand \nablaell [1] {\nabla_{\kern -2pt #1}}
\newcommand\init [1] {#1} 
\begin{document}
\title[Cometic functors]
{Cometic functors for small concrete categories and an application}

\author[G.\ Cz\'edli]{G\'abor Cz\'edli}
\email{czedli@math.u-szeged.hu}
\urladdr{http://www.math.u-szeged.hu/\textasciitilde{}czedli/}
\address{University of Szeged, Bolyai Institute, 
Szeged, Aradi v\'ertan\'uk tere 1, HUNGARY 6720}

\thanks{This research was supported by
NFSR of Hungary (OTKA), grant number K 115518}

\subjclass[2000] {18B05%
\red{.\hfill August 4, 2015}}

\keywords{Functor, category, natural transformation, injective map, monomorphism,  cometic category, cometic projection%
}

\begin{abstract} Our goal is to derive some families of maps, also known as functions, from injective maps and surjective maps; this can be useful in various fields of mathematics. Let $\cat A$ be a small concrete category. We
define a functor $\celfunct$, called \emph{cometic functor}, from $\cat A$ to the category $\secatt$
and a natural transformation $\celtransf$, called \emph{cometic projection}, from $\celfunct$ to the inclusion functor of $\cat A$ into $\secatt$ such that  the $\celfunct$-image of every monomorphism of $\cat A$ is an  injective map and the components of  $\celtransf$ are  surjective maps. Also, we give a nontrivial application of $\celfunct$ and $\celtransf$.
\end{abstract}

\maketitle
\section{Prerequisites and outline}
This paper consists of an easy category theoretical part followed by a more involved lattice theoretical part.  

The \emph{category theoretical} first part, which consists of Sections~\ref{sectioncatintro} and \ref{secmorecelestial}, is devoted to certain families of \emph{maps}, also known as \emph{functions}.  
Only few concepts are  needed from category theory; all of them are easy and their definitions will be recalled in the paper. Hence, there is no prerequisite for this part. Our purpose is to derive some families of maps from injective maps and surjective maps. This part can be interesting in various fields of algebra and even outside algebra.

The \emph{lattice theoretical} second part is built on the first part.  
The readers of the second part  are not assumed to have deep knowledge of lattice theory; a little part of any book on lattices, including \init{G.\ }Gr\"atzer~\cite{ggglt} and \init{J.\,B.\ }Nation~\cite{nation},  is sufficient.  

The paper is structured as follows. In Section~\ref{sectioncatintro}, we recall some basic concepts from category theory.  In Section~\ref{secmorecelestial}, we 
introduce cometic functors and cometic projections, and  prove  Theorem~\ref{thmcat} on them. 
In Section~\ref{seclatintro}, we formulate  Theorem~\ref{thmlat} on the representation of families of monotone maps by principal lattice congruences. 
Section~\ref{secquasicolor} tailors the toolkit developed for quasi-colored lattices in \init{G.~}Cz\'edli~\cite{czginjlatcat} to the present environment; when reading this section, \cite{czginjlatcat} should be nearby. In Section~\ref{secfromquasitohomo}, we prove a lemma that allows us to work with certain homomorphisms efficiently.
Finally, with the help of cometic functors and projections, Section~\ref{sectlatcompl} completes the paper by proving Theorem~\ref{thmlat}.

\section{Introduction to the category theory part}\label{sectioncatintro}
\subsection{Notation, terminology, and the rudiments} 
Recall that a \emph{category} $\cat A$ is a system $\tuple{\obj{\cat A}, \mor{\cat A},\circ}$ formed from a class $\obj{\cat A}$ of \emph{objects}, a class $\mor{\cat A}$ of \emph{morphisms}, and a partially defined binary operation $\circ$ on $\mor{\cat A}$ such that $\cat A$ satisfies certain axioms.  
Each $f\in \mor{\cat A}$ has a \emph{source object} $X\in \obj{\cat A}$ and a \emph{target object} $Y\in \obj{\cat A}$; the collection of morphisms with source object $X$ and target object $Y$ is denoted by $\fmor(X,Y)$ or $\fmor_{\cat A}(X,Y)$. 
The axioms require that $\fmor(X,Y)$ is a set for all $X,Y\in \obj{\cat A}$, every $\fmor(X,X)$
contains a unique \emph{identity morphism} $1_X$,
 $f\circ g$ is defined and belongs to $\fmor(X,Z)$ iff $f\in \fmor(Y,Z)$ and $g\in \fmor(X,Y)$, this multiplication is associative, and the identity morphisms are left and right units with respect to the multiplication. Note that $\fmor(X,Y)$ is often called a \emph{hom-set} of $\cat A$ and $\mor{\cat A}$ is the disjoint union of the hom-sets of $\cat A$.
If $\cat A$ and $\cat B$ are categories such that $\obj{\cat A}\subseteq \obj{\cat B}$ and 
$\mor{\cat A}\subseteq \mor{\cat B}$, then $\cat A$ is a \emph{subcategory} of $\cat B$. 
If  $\cat A$  is a category and $\obj{\cat A}$ is a set, then $\cat A$ is said to be a \emph{small category}. 

\begin{definition}\label{defconcrcat}
If $\cat A$ is a category such that 
\begin{enumeratei}
\item  every object of  $\cat A$ is a set,
\item  for all $X,Y\in\obj{\cat A}$
and $f\in\fmor(X,Y)$, $f$ is a map from $X$ to $Y$,  and 
\item the operation is the usual composition of maps,
\end{enumeratei}
then $\cat A$ is a \emph{concrete category}. Note the rule $(f\circ g)(x)=f\bigl(g(x)\bigr)$, that is, we compose maps from right to left. Note also that $\fmor(X,Y)$ does not have to contain all possible maps from $X$ to $Y$. The category of all sets with all maps between sets will be denoted by $\secatt$.  
\end{definition}

\begin{remark} 
In category theory, the concept of 
concrete categories is usually  based on forgetful functors and it has a more general meaning. Since this paper is not only for category theorists, we adopt Definition~\ref{defconcrcat}, which is conceptually simpler but, apart from mathematically insignificant technicalities, will not reduce the generality of our result, Theorem~\ref{thmcat}.
\end{remark}

For an arbitrary category $\cat A$ and $f\in \mor{\cat A}$, if $f\circ g_1= f\circ g_2$ implies $g_1=g_2$ for all $g_1,g_2\in \mor{\cat A}$ such that both  $f\circ g_1$ and $f\circ g_2$ are defined, then $f$ is a \emph{monomorphism} in $\cat A$. 
Note that if $\cat A$ is a subcategory of $\cat B$, then a monomorphism of $\cat A$ need not be a monomorphism of $\cat B$. 
In a concrete category, an injective morphism is always a monomorphism but not conversely. The opposite (that is, left-right dual) of the concept of monomorphisms is that of \emph{epimorphisms}. An \emph{isomorphism} in $\cat A$ is a morphism that is both mono and epi. Next, let $\cat A$ and $\cat B$ be categories. 
An assignment $F\colon \cat A\to \cat B$ is a \emph{functor} if $F(X)\in \obj{\cat B}$ for every $X\in\obj{\cat A}$, $F(f)\in\fmor_{\cat B}(F(X),F(Y))$ for every $f\in\fmor_{\cat A}(X,Y)$, $F$ commutes with $\circ$, and $F$ maps the identity morphisms to identity morphisms. 
If $F(f)=F(g)$ implies $f=g$ for all $X,Y\in \obj{\cat A}$ and all $f,g\in\fmor_{\cat A}(X,Y)$, then $F$ is called a \emph{faithful functor}. Although category theory seems to avoid talking about equality of objects, to make our theorems stronger, we introduce the following concept.

\begin{definition}
For categories $\cat A$ and $\cat B$ and a functor $F\colon \cat A\to \cat B$, $F$ is a \emph{totally faithful functor} if, for all $f,g\in \mor{\cat A}$, $f\neq g$ implies that $F(f)\neq F(g)$. 
\end{definition}

\begin{remark} Let  $F\colon \cat A\to \cat B$ be a functor. Then $F$ is  totally faithful iff it is  faithful and, for all $X,Y\in\obj{\cat A}$, $X\neq Y$ implies $F(X)\neq F(Y)$.
\end{remark}

\begin{proof} Assume that $F$ is totally faithful, and let $X,Y\in \obj{\cat A}$ such that $X\neq Y$. Then
$1_X\neq 1_Y$, so $1_{F(X)}= F(1_X)\neq F(1_Y) = 1_{F(Y)}$, and we conclude that $F(X)\neq F(Y)$. To see the converse implication, let $f_1\in \fmor_{\cat A}(X_1,Y_1)$ and $f_2\in \fmor_{\cat A}(X_2,Y_2)$ such that $f_1\neq f_2$.  If $\pair{X_1}{Y_1}=\pair{X_2}{Y_2}$, then $F(f_1)\neq F(f_2)$ since $F$ is faithful. 
Otherwise, $\pair{F(X_1)}{F(Y_1)} \neq \pair{F(X_2)}{F(Y_2)}$ by the assumption, and $f_1\neq f_2$ follows from $\fmor_{\cat B}(F(X_1),F(Y_1))
\cap \fmor_{\cat B}(F(X_2),F(Y_2))=\emptyset$, since $\mor{\cat B}$ is the \emph{disjoint} union of the hom-sets of $\cat B$.  
\end{proof}

If $\cat A$ is a subcategory of $\cat B$, then the 
\begin{equation}
\text{\emph{inclusion functor} }\eidfunct{\cat A}{\cat B}\colon \cat A\to \cat B\text{ is defined}
\label{eqrinclusionfunctor}
\end{equation}
by the rules $\eidfunct{\cat A}{\cat B}(X)=X$ for $X\in \obj{\cat A}$ and
$\eidfunct{\cat A}{\cat B}(f)=f$ for $f\in \mor{\cat A}$. The  \emph{identity functor} 
$\idfunct{\cat A}\colon \cat A\to \cat A$ is the particular case $\cat 
B=\cat A$, that is, $\idfunct{\cat A}:=\eidfunct{\cat A}{\cat A}$.
For a functor $F\colon\cat A\to \cat B$, the \emph{$F$-image} of $\cat A$ is the category 
\begin{equation*}
F(\cat A)=\tuple{\set{F(X):X\in\obj{\cat A}},\, \set{F(f):f\in\mor{\cat A}}, \circ}\text.
\end{equation*}

Next, let $F$ and $G$ be functors from a category $\cat A$ to a category $\cat B$. 
A \emph{natural transformation} $\tautr\colon F\to G$ is a system $\tuple{\tautr_X: X\in\obj{\cat A}}$ of morphisms of $\cat B$ such that the \emph{component} $\tautr_X$ of $\tautr$ at $X$ belongs to $\fmor_{\cat B}(F(X),G(X))$ for every $X\in\obj{\cat A}$, and for every $X,Y\in\obj{\cat A}$ and every $f\in \fmor_{\cat A}(X,Y)$, the diagram 
\begin{equation*}
\begin{CD}
F(X)  @>{F(f)}>> F(Y) \\
@V{\tautr_X}VV   @V{\tautr_Y}VV   \\
G(X) @>{G(f)}>>  G(Y)
\end{CD}
\end{equation*}
commutes, that is, 
$\tautr_Y\circ F(f)= G(f)\circ \tautr_X$.
If all the components $\tautr_X$ of $\tautr$ are isomorphisms in $\cat B$, then $\tautr$ is a \emph{natural isomorphism}. 
If there is a natural isomorphism $\tautr\colon F\to G$, then $F$ and $G$ are \emph{naturally isomorphic functors}. 
Note that naturally isomorphic functors are, sometimes, also called \emph{naturally equivalent}.

\section{Cometic functors and projections}\label{secmorecelestial}
Our purpose is to derive some families of maps from injective and surjective maps. To do so, we introduce some concepts. The third 
component of an arbitrary triplet  $\tuple{x,y,z}$ is obtained by the \emph{third projection} $\fthird$, in notation, 
\[\third{\tuple{x,y,z}}=z\text.
\]
\begin{definition}\label{defcomet}
Given a small concrete category $\cat A$, a triplet $c=\tuple{f,x,y}$ is an \emph{eligible triplet} of $\cat A$ if there exist $X,Y\in\obj{\cat A}$ such that  $f\in \fmor_{\cat A}(X,Y)$, $x\in X$, $y\in Y$,  and $f(x)=y$. The third component  of  $c=\tuple{f,x,y}$ will also be denoted by  
\[\xpceltransf Y{\tuple{f,x,y}} := \third{\tuple{f,x,y}}
 y=f(x),\,\text{ provided that}\, y\in Y\text. 
\]
For $x\in X\in \obj{\cat A}$, 
\begin{equation*}\trivcom(x)=\trivcom_X(x)\,\text{ denotes }\, \tuple{1_X,x,x},
\end{equation*}
the \emph{trivial triplet} at $x$. Note the obvious rule
\begin{equation}
\xpceltransf X{\trivcom_X(x)}=x,\,\text{ for }\,x\in X\text.
\label{eqrtrrtrainv} 
\end{equation}
\end{definition}

\begin{definition}\label{defcomeTic}
Given a small concrete category $\cat A$ (see Definition~\ref{defconcrcat}), we define the \emph{cometic functor} 
\[\celfunct=\ccelfunct{\cat A}\colon \cat A\to \secatt 
\]
associated with $\cat A$ as follows. 
For each $Y\in \obj{\cat A}$, we let 
\[\celfunct(Y):=\set{\tuple{f,x,y}: \tuple{f,x,y}\text{ is an eligible triplet of }\cat A\text{ and }y\in Y}\text.
\]
For $X,Y\in\obj{\cat A}$ and $g\in\fmor_{\cat A}(X,Y)$, we define $\celfunct(g)$ as the map
\begin{equation*}
\begin{aligned}
\celfunct(g)\colon \celfunct(X) &\to \celfunct(Y),
\text{ defined by }\cr 
\tuple{f,x,y}&\mapsto\tuple{g\circ f, x, g(y)}\text.
\end{aligned}
\end{equation*}
Finally, the map $X\to \celfunct(X)$, defined by $x\mapsto \trivcom(x)$, will be denoted by $\trivcom_X$.
\end{definition}
We could also denote an eligible triplet $\tuple{f,x,y}$ by  $x\overset f\mapsto y$, but technically the triplet is a more convenient notation than the $f$-labeled 
``\texttt{$\backslash$mapsto}'' arrow. However, in this paragraph, let us think of  eligible triplets as arrows. The trivial arrows $\trivcom_X (x)$ with $x\in X$ 
correspond to the elements of $X$. Besides these arrows, $\celfunct(X)$ can contain many other  arrows, which are of different lengths and of different directions in space but with third components in $X$. This geometric interpretation of $\celfunct(X)$ resembles a real comet; the trivial  arrows form the nucleus while the rest of arrows the coma and the tail. This explains the adjective ``cometic''.

\begin{lemma}\label{lemmaCometic} 
$\celfunct=\ccelfunct{\cat A}$  from Definition~\ref{defcomeTic} is a totally faithful functor. 
\end{lemma}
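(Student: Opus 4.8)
The plan is to verify, in turn, that $\celfunct$ is a well-defined functor, that it is faithful, and that it is injective on objects; total faithfulness then follows from the earlier Remark, which characterizes it as faithfulness together with the condition that distinct objects have distinct images.

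First I would settle well-definedness. For each object $X$, the eligible triplets with third component in $X$ form a subset of $\mor{\cat A}\times\bigcup\obj{\cat A}\times\bigcup\obj{\cat A}$, which is a genuine set because $\cat A$ is small and its objects are sets; hence $\celfunct(X)\in\obj{\secatt}$. The substantive step is that $\celfunct(g)$ really maps $\celfunct(X)$ into $\celfunct(Y)$ for $g\in\fmor_{\cat A}(X,Y)$. Take $\tuple{f,x,y}\in\celfunct(X)$, so that $\tuple{f,x,y}$ is eligible and $y\in X$. To form $\tuple{g\circ f,\,x,\,g(y)}$ one needs $g\circ f$ to be a morphism of $\cat A$, i.e.\ the codomain of $f$ to coincide with the source $X$ of $g$; granting this and writing $f\in\fmor_{\cat A}(W,X)$, we obtain $g\circ f\in\fmor_{\cat A}(W,Y)$ with $x$ in its source, $(g\circ f)(x)=g\bigl(f(x)\bigr)=g(y)$, and $g(y)\in Y$, so the image triplet is eligible and lies in $\celfunct(Y)$. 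I expect this to be the main obstacle: one must argue that membership of $\tuple{f,x,y}$ in $\celfunct(X)$ forces the codomain of $f$ to be $X$, reconciling the membership condition of $\celfunct(X)$ with composability. This holds, for instance, when the objects of $\cat A$ are pairwise disjoint sets — a harmless assumption for a small category — and I would arrange or invoke that. The remaining functoriality is then routine: for composable $g,h$ both $\celfunct(h\circ g)(\tuple{f,x,y})$ and $\celfunct(h)\bigl(\celfunct(g)(\tuple{f,x,y})\bigr)$ expand to $\tuple{h\circ g\circ f,\,x,\,h(g(y))}$ by associativity of $\circ$, and $\celfunct(1_X)$ sends $\tuple{f,x,y}$ to $\tuple{1_X\circ f,\,x,\,1_X(y)}=\tuple{f,x,y}$, so $\celfunct(1_X)=1_{\celfunct(X)}$.

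For faithfulness I would test on trivial triplets. Suppose $g_1,g_2\in\fmor_{\cat A}(X,Y)$ satisfy $\celfunct(g_1)=\celfunct(g_2)$. If $X=\emptyset$ then $\fmor_{\cat A}(X,Y)$ has at most one element, so $g_1=g_2$; otherwise choose $x\in X$ and compute $\celfunct(g_i)\bigl(\trivcom_X(x)\bigr)=\celfunct(g_i)(\tuple{1_X,x,x})=\tuple{g_i\circ 1_X,\,x,\,g_i(x)}=\tuple{g_i,\,x,\,g_i(x)}$, whose first component is $g_i$ itself, so the assumed equality forces $g_1=g_2$. For object-injectivity I would note that $X$ is recoverable from the set $\celfunct(X)$: by definition every $c\in\celfunct(X)$ has $\xpceltransf{X}{c}\in X$, while every $x\in X$ occurs as $\xpceltransf{X}{\trivcom_X(x)}=x$ by \eqref{eqrtrrtrainv}, whence $X=\set{\xpceltransf{X}{c}:c\in\celfunct(X)}$. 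Thus $\celfunct(X)=\celfunct(Y)$ forces $X=Y$, i.e.\ $X\neq Y$ implies $\celfunct(X)\neq\celfunct(Y)$. Faithfulness together with this object-injectivity gives, by the Remark characterizing total faithfulness as the conjunction of these two properties, that $\celfunct$ is totally faithful.
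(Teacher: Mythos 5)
Your proposal follows the same route as the paper's own proof: functoriality by direct computation on triplets, faithfulness by evaluating at the trivial triplets $\trivcom_X(x)$ and comparing first components (the paper notes one may compare first or third components), and object-injectivity combined with the earlier Remark to conclude total faithfulness. The only cosmetic difference in the last step is that you recover $X$ as $\set{\third c : c\in \celfunct(X)}$, whereas the paper picks $x\in X\setminus Y$ (when $X\nsubseteq Y$) and observes that $\trivcom(x)\in\celfunct(X)\setminus\celfunct(Y)$; these are the same idea.

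The genuine divergence is your well-definedness discussion, and there the situation is delicate. Your concern is real: the paper's proof never checks that $\celfunct(g)$ maps $\celfunct(X)$ into $\celfunct(Y)$, and if one reads ``$y\in Y$'' in Definition~\ref{defcomeTic} literally as set membership, then a triplet $\tuple{f,x,y}\in\celfunct(X)$ may have $f\in\fmor_{\cat A}(W,V)$ with $V\neq X$ and merely $y\in V\cap X$; in that case $g\circ f$ is not a composable pair in $\cat A$ (indeed it need not even be a well-defined map, since $f$ can take values outside $X$), so $\tuple{g\circ f,x,g(y)}$ need not be an eligible triplet. However, your repair is not legitimate for the lemma as stated: the claim concerns $\ccelfunct{\cat A}$ for the \emph{given} small concrete category $\cat A$, with no disjointness hypothesis, and disjointness cannot simply be ``arranged'' --- replacing $\cat A$ by an isomorphic category with pairwise disjoint objects changes the cometic functor and hence the statement being proved. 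The fix that keeps full generality is a different reading of the definition: since $\mor{\cat A}$ is, by the paper's axioms, the \emph{disjoint} union of the hom-sets, every morphism has a unique target, and $\celfunct(X)$ should be taken to consist of the eligible triplets whose first component has target $X$. Under that reading, membership in $\celfunct(X)$ automatically forces composability of $g$ with $f$, and your entire argument (faithfulness and object-injectivity included) goes through verbatim for an arbitrary small concrete category. (Your disjointness assumption would in fact suffice for the paper's application in Section~6, where the objects of $\fofunct(\cat B)$ are tagged sets and hence pairwise disjoint, but the lemma itself is stated more generally.)
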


\begin{proof} First, we prove that $\celfunct:=\ccelfunct{\cat A}$ is a functor.
Obviously, the $\celfunct$-image of an identity morphism is an identity morphism. Assume that $X,Y,Z\in \obj{\cat A}$, $f\in\fmor_{\cat A}(X,Y)$, $g\in\fmor_{\cat A}(Y,Z)$, $c=\tuple{h,x,y}\in \celfunct(X)$,  and let us compute:
\[
\begin{aligned}
\bigl(\celfunct(g)&\circ \celfunct(f)\bigr)(c)=
\celfunct(g)\bigl(\celfunct(f)(c)\bigr)\cr
&=\celfunct(g)\bigl(\tuple{f\circ h, x, f(y)}\bigr) =\tuple{g\circ (f\circ h), x, g(f(y))} \cr
&=\tuple{(g\circ f)\circ h, x, (g\circ f)(y)} =\celfunct(g\circ f)(c)\text.
\end{aligned}
\]
Hence, $\celfunct(g)\circ \celfunct(f)=\celfunct(g\circ f)$ and $\celfunct$ is a functor.
To prove that $\celfunct$ is faithful, assume that $X,Y\in\obj{\cat A}$, $f,g\in\fmor_{\cat A}(X,Y)$, and $\celfunct(f)=\celfunct(g)$; we have to show that $f=g$. This is clear if $X=\emptyset$. Otherwise, for $x\in X$,
\[
\tuple{f\circ 1_X, x,f(x)}=
\celfunct(f)(\trivcom(x))=\celfunct(g)(\trivcom(x))
=\tuple{g\circ 1_X, x,g(x)}\text.
\]
Comparing either the third components (for all $x\in X$), or the first components, we conclude that $f=g$. Thus, $\celfunct$ is faithful. Finally, if $X,Y\in\obj{\cat A}$ and $X\nsubseteq Y$, then there is an $x\in X\setminus Y$. Since $\trivcom(x)\in \celfunct(X)\setminus\celfunct(Y)$, we conclude that $\celfunct$ is totally faithful.
\end{proof}

\begin{definition} Let $\cat A$ be a small concrete category, let $ \eidfunct{\cat A}{\secatt}$ be the inclusion functor from $\cat A$ into $\secatt$, see  \eqref{eqrinclusionfunctor}, and keep Definition~\ref{defcomeTic} in mind.  Then the  transformation 
\[
\celtransf=\catceltransf{\cat A}\colon \celfunct \to \eidfunct{\cat A}{\secatt}
\]
whose components are defined  by
\[
\xceltransf X \colon \celfunct(X)\to X\quad\text{ and }\quad \xpceltransf X c:=\third c,
\]
for $X\in \obj{\cat A}$ and $c\in\celfunct(X)$, 
is the \emph{cometic projection}  associated with $\cat A$. (Note that $\xceltransf X$ is simply the restriction of the third projection $\fthird$ to $\celfunct(X)$.)
\end{definition}

\begin{lemma}\label{lemmapiisnattransf}
The cometic projection defined above is a natural transformation and its components are surjective maps. 
\end{lemma}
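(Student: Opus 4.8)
The plan is to verify the two assertions of the lemma separately: first that $\celtransf=\catceltransf{\cat A}$ meets the defining requirements of a natural transformation from $\celfunct$ to $\eidfunct{\cat A}{\secatt}$, and then that each of its components is onto. The component condition that $\xceltransf X$ be a morphism of $\secatt$ from $\celfunct(X)$ to $\eidfunct{\cat A}{\secatt}(X)=X$ needs no argument: by its very definition $\xceltransf X$ is the restriction of the third projection $\fthird$ to $\celfunct(X)$, hence it is a map from $\celfunct(X)$ to $X$, and every map is a morphism of $\secatt$. So the only substantive part of the first assertion is the commutativity of the naturality square.

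To check naturality I would fix $X,Y\in\obj{\cat A}$ and $g\in\fmor_{\cat A}(X,Y)$, recall that $\eidfunct{\cat A}{\secatt}(g)=g$, and chase an arbitrary eligible triplet $c=\tuple{f,x,y}\in\celfunct(X)$ around both paths of the square. Going right then down, $\celfunct(g)$ sends $c$ to $\tuple{g\circ f,\,x,\,g(y)}$, whose third component is $g(y)$, so $\bigl(\xceltransf Y\circ\celfunct(g)\bigr)(c)=g(y)$. Going down then right, $\xceltransf X$ sends $c$ to its third component $y$, and applying $g$ yields $\bigl(g\circ\xceltransf X\bigr)(c)=g(y)$. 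The two outputs coincide for every $c$, so $\xceltransf Y\circ\celfunct(g)=g\circ\xceltransf X$, which is exactly the identity required for $\celtransf$ to be natural.

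For surjectivity I would fix $X\in\obj{\cat A}$ and an arbitrary $x\in X$. The trivial triplet $\trivcom_X(x)=\tuple{1_X,x,x}$ is an eligible triplet whose third component lies in $X$, hence it belongs to $\celfunct(X)$; and by \eqref{eqrtrrtrainv} its image under $\xceltransf X$ is its third component $x$. Thus $x$ lies in the range of $\xceltransf X$, and since $x$ was arbitrary, $\xceltransf X$ is surjective.

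I do not expect a genuine obstacle here. The whole lemma reduces to the single observation that forming the third component of a triplet commutes with post-composing by $g$, which is built into the definition of $\celfunct(g)$, together with the fact that the trivial triplets supply a preimage for every element of $X$. The only point that calls for any care is checking that $\celfunct(g)$ really lands in $\celfunct(Y)$ when one traverses the square, but this well-definedness is already part of $\celfunct$ being a functor, established in Lemma~\ref{lemmaCometic}.
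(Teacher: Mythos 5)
Your proof is correct and follows essentially the same route as the paper: the same element chase around the naturality square (computing that both composites send $\tuple{f,x,y}$ to $g(y)$) and the same use of the trivial triplets $\trivcom_X(x)$ to witness surjectivity of each component $\xceltransf X$. The extra remarks you add (that $\xceltransf X$ is indeed a morphism of $\secatt$, and that well-definedness of $\celfunct(g)$ comes from Lemma~\ref{lemmaCometic}) are fine but not a departure from the paper's argument.
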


\begin{proof} Let $X,Y\in\cat A$ and $f\in\fmor(X,Y)$. 
We have to prove that the diagram 
\begin{equation}
\begin{CD}
\celfunct(X)  @>{\celfunct(f)}>> \celfunct(Y) \\
@V{\xceltransf X}VV   @V{\xceltransf Y}VV   \\
X @>{f}>>  Y
\end{CD}
\label{CDcelUUtdef}
\end{equation}
commutes. For an arbitrary triplet $c=\tuple{h,x,y}\in \celfunct(X)$, 
\allowdisplaybreaks{
\begin{align*}
\bigl(\xceltransf Y\circ\celfunct(f)\bigr)(c) &= \xceltransf Y \bigl(\celfunct(f)(c)\bigr)  = \xceltransf Y \bigl(\tuple{f\circ h,x,f(y)}\bigr)\cr 
& = f(y)=   f\bigl(\xceltransf X(c)\bigr)
= (f\circ \xceltransf X)(c),
\end{align*}}%
which proves the commutativity of  \eqref{CDcelUUtdef}. Finally, for $X\in \obj{\cat A}$ and $x\in X$, $x= \xceltransf X(\trivcom (x))$.  Thus, the components of $\celtransf$ are surjective. 
\end{proof}

Now, we are in the position to state the main result of this section; it also summarizes  Lemmas~\ref{lemmaCometic} and \ref{lemmapiisnattransf}.

\begin{theorem}\label{thmcat}
Let a  $\cat A$ be a small concrete category.
\begin{enumerate}[\kern5pt \normalfont(A)]
\item\label{thmcatA} For the
cometic functor $\celfunct=\ccelfunct{\cat A}$ and the cometic projection $\celtransf=\catceltransf{\cat A}$ associated with $\cat A$, the following hold.
\begin{enumeratei} 
\item\label{thmcatAa} $\celfunct\colon \cat A\to \secatt$ is a totally faithful functor, and  $\celtransf\colon \celfunct\to \eidfunct{\cat A}\secatt$ is a natural transformation whose components are surjective maps.   
\item\label{thmcatAb} For every $f\in\mor{\cat A}$, $f$ is a monomorphism in $\cat A$ if and only if $\celfunct(f)$ is an injective map. 
\end{enumeratei}
\item\label{thmcatB} Whenever $F\colon\cat A\to\secatt$ is a  functor and $\tautr\colon F\to \eidfunct{\cat A}\secatt$ is a natural transformation whose components are surjective maps, then for every morphism $f\in \mor{\cat A}$, if  $F(f)$ is an injective map, then $f$ is a monomorphism in $\cat A$.
\end{enumerate}
\end{theorem}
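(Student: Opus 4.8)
The plan is to dispose of the routine assertions quickly and to concentrate the real argument on part~\ref{thmcatB}, since one direction of part~\ref{thmcatAb} will then come for free. Part~\ref{thmcatAa} requires no new work at all: its two clauses are exactly the contents of Lemmas~\ref{lemmaCometic} and~\ref{lemmapiisnattransf}, so I would simply assemble those.

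Next I would establish the forward implication of part~\ref{thmcatAb}: a monomorphism $f\in\fmor_{\cat A}(X,Y)$ has injective $\celfunct$-image. The argument is a direct comparison of triplets. Given $c_i=\tuple{h_i,x_i,y_i}\in\celfunct(X)$ for $i=1,2$ with $\celfunct(f)(c_1)=\celfunct(f)(c_2)$, that is $\tuple{f\circ h_1,x_1,f(y_1)}=\tuple{f\circ h_2,x_2,f(y_2)}$, I would read off $f\circ h_1=f\circ h_2$ from the first components and $x_1=x_2$ from the second. Since $f$ is a monomorphism, the first equality forces $h_1=h_2$; and because the $c_i$ are eligible triplets we have $y_i=h_i(x_i)$, so $y_1=h_1(x_1)=h_2(x_2)=y_2$ and hence $c_1=c_2$. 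The one point needing care is that the compositions $f\circ h_i$ really are defined in $\cat A$, but this is guaranteed by $c_i\in\celfunct(X)$ together with the well-definedness of $\celfunct$ already verified in Lemma~\ref{lemmaCometic}.

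The substance of the theorem is part~\ref{thmcatB}, where the plan is a two-step cancellation, first upstairs in $\secatt$ and then downstairs in $\cat A$. Suppose $F$ and $\tautr$ are as hypothesised and $F(f)$ is injective for some $f\in\fmor_{\cat A}(X,Y)$. To test whether $f$ is a monomorphism, take $g_1,g_2\in\mor{\cat A}$ with $f\circ g_1$ and $f\circ g_2$ both defined and equal; definedness forces the target of each $g_i$ to be $X$, and the equality forces a common source, so $g_1,g_2\in\fmor_{\cat A}(W,X)$. Applying $F$ and using $F(f\circ g_i)=F(f)\circ F(g_i)$, I would deduce $F(f)\circ F(g_1)=F(f)\circ F(g_2)$; since $F(f)$ is an injective map it is left-cancellable in $\secatt$, giving $F(g_1)=F(g_2)$. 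Now I would bring in naturality: the square of $\tautr$ at $g_i$ says $g_i\circ\xtautr{W}=\xtautr{X}\circ F(g_i)$, and as the right-hand sides coincide we get $g_1\circ\xtautr{W}=g_2\circ\xtautr{W}$. Finally, since $\xtautr{W}$ is surjective it is right-cancellable in $\secatt$, so $g_1=g_2$, proving $f$ a monomorphism.

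With part~\ref{thmcatB} in hand, the reverse implication of part~\ref{thmcatAb} is immediate: I would apply~\ref{thmcatB} to the special pair $F=\celfunct$ and $\tautr=\celtransf$, which is legitimate precisely because part~\ref{thmcatAa} certifies that they satisfy the required hypotheses, and conclude that injectivity of $\celfunct(f)$ forces $f$ to be a monomorphism. I expect the main obstacle to be exactly the descent step in part~\ref{thmcatB}: left-cancellation only delivers the equality $F(g_1)=F(g_2)$ at the level of $\secatt$, and the genuinely important idea is to transport this back to $\cat A$ by combining the naturality squares with the surjectivity---hence right-cancellability---of the components $\xtautr{W}$. Everything else is either a triplet-by-triplet comparison or a citation of the two earlier lemmas.
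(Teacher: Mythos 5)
Your proposal is correct and follows essentially the same route as the paper's proof: part~\eqref{thmcatAa} from Lemmas~\ref{lemmaCometic} and~\ref{lemmapiisnattransf}, the forward direction of~\eqref{thmcatAb} by component-wise comparison of eligible triplets, part~\eqref{thmcatB} by cancelling $F(f)$ via injectivity and then descending to $\cat A$ through the naturality squares and the surjectivity of the components of $\tautr$, and the reverse direction of~\eqref{thmcatAb} as a corollary of~\eqref{thmcatAa} and~\eqref{thmcatB}. The only (immaterial) difference is that you phrase the descent step as map-level left/right cancellation, whereas the paper runs the same argument pointwise by choosing, for each $x$, a preimage $a$ under the surjective component.
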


By part \eqref{thmcatB}, we cannot ``translate'' more morphisms to injective maps than those translated by $\celfunct$. In this sense, part \eqref{thmcatB} is the converse of part \eqref{thmcatA} (with less assumptions on the functor).

\begin{proof}[Proof of Theorem~\ref{thmcat}]  \eqref{thmcatAa} is the conjunction of Lemmas~\ref{lemmaCometic} and \ref{lemmapiisnattransf}.

To prove part  \eqref{thmcatB}, let $\cat A$ be a small concrete category, let $F\colon \cat A\to \secatt$ be a functor, and let $\tautr\colon F\to \eidfunct{\cat A}\secatt$ be a natural transformation with surjective components. Assume that $Y,Z\in \obj{\cat A}$ and $f\in \fmor_{\cat A}(Y,Z)$ such that $F(f)$ is injective. To prove that $f$ is a monomorphism in $\cat A$, let $X\in \obj{\cat A}$ and $g_1,g_2\in \fmor_{\cat A}(X,Y)$ such that $f\circ g_1=f\circ g_2$; we have  to show that $g_1=g_2$. 
That is, we have to show that, for an arbitrary $x\in X$, $g_i(x)$ does not depend on $i\in\set{1,2}$.
By the surjectivity of $\tautr_X$,  we can pick an element $a\in \celfunct(X)$ such that $x=\tautr_X(a)$. 
Since $f\circ g_1=f\circ g_2$, 
\[
F(f)\bigl( F(g_i)(a)\bigr)=
\bigl(F(f)\circ F(g_i)\bigr)(a)= 
F(f\circ g_i)(a)
\]
does not depend on $i\in\set{1,2}$. Hence, the injectivity of  $F(f)$ yields that $F(g_i)(a)$ does not depend on $i\in\set{1,2}$. Since $\tautr$ is a natural transformation, 
\begin{equation*}
\begin{CD}
F(X)  @>{F(g_i)}>> F(Y) \\
@V{\tautr_X}VV   @V{\tautr_Y}VV   \\
X @>{g_i}>>  Y
\end{CD}
\end{equation*}
is a commutative diagram, and we obtain that
\begin{align*}
g_i(x)=g_i\bigl(\tautr_X(a)\bigr)= 
(g_i\circ\tautr_X)(a)= \bigl(\tautr_Y\circ F(g_i)\bigr)(a) = \tautr_Y  \bigl(F(g_i)(a)\bigr)\text.
\end{align*}
Hence, $g_i(x)$ does not depend on $i\in\set{1,2}$, because neither does $F(g_i)(a)$. Consequently,  $g_1=g_2$. Thus, $f$ is a monomorphism, proving  part  \eqref{thmcatB}.

To prove  the ``only if'' direction of  \eqref{thmcatAb}, assume that $X,Y\in \obj{\cat Y}$ and $f\in\fmor_{\cat A}(X,Y)$ such that $f$ is a monomorphism in $\cat A$. We have to show that $\celfunct(f)$ is injective. 
To do so, let $c_i=\tuple{h_i,x_i,y_i}\in \celfunct(X)$ such that $\celfunct(f)(c_1)=\celfunct(f)(c_2)$. Since the middle components of 
\[\tuple{f\circ h_1,x_1,f(y_1)} = \celfunct(f)(c_1)=\celfunct(f)(c_2)= \tuple{f\circ h_2,x_2,f(y_2)}\text.
\]
are equal, we have that $x_1=x_2$. Since $f$ is a  monomorphism, the equality of the first components yields that $h_1=h_2$. Since $c_1$ and $c_2$ are eligible triplets, the first two components determine the third. Hence, $c_1=c_2$  and $\celfunct(f)$ is injective, as required. This proves the ``only if'' direction  of part \eqref{thmcatAb}.

Finally, the  ``if'' direction of  \eqref{thmcatAb} follows from
\eqref{thmcatAa} and \eqref{thmcatB}.
\end{proof}

\begin{remark}
There are many examples of monomorphisms in small concrete categories that are not injective. For example, let $f\colon X\to Y$ be a non-injective map between two distinct sets. Consider the category $\cat A$ with $\obj{\cat A}=\set{X,Y}$ and $\mor{\cat A}=\set{1_X, 1_Y,f}$; then $f$ is a monomorphism in $\cat A$. For a bit more general example, see Example~\ref{exAmplesubcatE}.
\end{remark}

\begin{remark} Let $\cat A$ be as in Theorem~\ref{thmcat}, $X,Y\in \obj{\cat A}$, and $f\in \fmor(X,Y)$. Since $\trivcom_X$ from Definition~\ref{defcomeTic} is a right inverse of $\xceltransf X$, the commutativity of \eqref {CDcelUUtdef}
yields easily that 
$f=\xceltransf Y \circ \celfunct(f)\circ \trivcom_X$.  Note, however, that $\trivcom$ is not a natural transformation in general.
\end{remark}

\begin{remark} Let $\cat A$ be as in Theorem~\ref{thmcat}. As an easy consequence of the theorem, every monomorphism of $\celfunct(\cat A)$ is an injective map. In this sense, 
the category $\celfunct(\cat A)$ is ``better'' than $\cat A$. Since $\celfunct(\cat A)$ is obtained by the cometic functor, one might, perhaps, call it the \emph{celestial category} associated with $\cat A$. 
\end{remark}

\section{Introduction to the lattice theory part}\label{seclatintro}  
From now on, the paper is mainly for lattice theorists. However, the reader is not assumed to have deep knowledge of lattice theory; a little part of any book on lattices, including \init{G.\ }Gr\"atzer~\cite{ggglt} and \init{J.\,B.\ }Nation~\cite{nation},  is sufficient.

Motivated by the history of the congruence lattice representation problem, which  culminated in \init{F.\ }Wehrung~\cite{wehrung} and  \init{P.\ }R\r{u}\v{z}i\v{c}ka~\cite{ruzicka},
\init{G.\ }Gr\"atzer in  \cite{ggprincl}  has recently started an analogous new topic of lattice theory.  Namely, for a lattice $L$, let $\princ L=\pair{\princ L}{\subseteq}$ denote the   ordered set  of principal congruences of $L$. A congruence is \emph{principal} if it is generated by a pair $\pair ab$ of elements.
Ordered sets (also called  partially ordered sets or posets) and lattices with 0 and 1 are called \emph{bounded}. If $L$ is a bounded lattice, then $\princ L$ is a bounded ordered set.  Conversely,  \init{G.\ }Gr\"atzer~\cite{ggprincl} proved that  every  bounded  ordered set $P$ is isomorphic to $\princ L$ for an appropriate bounded lattice $L$ of length 5.  The ordered sets $\princ L$ of countable but not necessarily bounded lattices $L$ were characterized in \init{G.\ }Cz\'edli~\cite{czgprincc}. 

\begin{definition}
We define the following four categories.
\begin{enumeratei}
\item $\alllat$ is the category of at least 2-element bounded lattices with $\set{0,1}$-preserving lattice homomorphisms.
\item  $\latf$ is the category of lattices of length 5 with $\set{0,1}$-preserving lattice homomorphisms.
\item $\latasdf$ is the category of selfdual bounded lattices of length 5 with $\set{0,1}$-preserving lattice homomorphisms.
\item $\bposets$ is the category of at least 2-element bounded ordered sets with 
$\set{0,1}$-preserving monotone maps.
\end{enumeratei}

\end{definition}

Note that $\latasdf$ is a subcategory of $\latf$, which is a subcategory of $\alllat$. Note also that if  $X$ and $Y$ are ordered sets and $|Y|=1$, then $\fmor(X,Y)$ consists of the trivial map and $\fmor (Y,X)\neq \emptyset$ iff $|X|=1$.  Hence, we do not loose anything interesting by excluding the  singleton ordered sets from $\bposets$. A similar comment applies for singleton lattices, which are excluded from $\alllat$.

For an algebra  $A$ and $x,y\in A$, the principal congruence generated by $\pair x y$ is denoted by $\cg x y$ or $\cgi A x y$. 
For lattices, the following observation is due to \init{G.\ }Gr\"atzer~\cite{gghomoprinc}; see also 
\init{G.\ }Cz\'edli~\cite{czgsingleinjectiveprinc} for the injective case. Note that $\princ A$ is meaningful for every algebra $A$. 

\begin{remark} \label{remarkzetafAB}
If $A$ and $B$ are algebras of the same type and   $f\colon A\to B$ is a homomorphism, then \begin{equation}
\begin{aligned}
\zetaf f{A}{B}\colon\princ {A}&\to \princ{B} 
\text{, defined by }\cr
\cgi {A}x y&\mapsto \cgi {B}{f(x)}{f(y)},
\end{aligned}
\label{eqrefsljGfB}
\end{equation}
is a $0$-preserving monotone map. Consequently, for every concrete category $\cat A$ of similar algebras with all homomorphisms as morphisms, $\fprinc$ is a functor from $\cat A$ to the category of ordered sets having 0 with 0-preserving monotone maps. 
\end{remark}

\begin{proof} We only have to prove that $\zetaf fAB$ is a well-defined map, since the rest of the statement is obvious. That is, we have to prove that if $\cgi Aab=\cgi Acd$, then $\cgi B{f(a)}{f(b)}=\cgi B{f(c)}{f(d)}$. Clearly,  it suffices to prove that if $a,b,c,d\in A$ such that $\pair ab \in\cgi {A}c d$, then 
$\pair {f(a)}{f(b)} \in\cgi {B}{f(c)}{f(d)}$.
According to a classical lemma of \init{A.~I.~}Mal'cev~\cite{malcevlemma}, see also \init{E.~}Fried, \init{G.~}Gr\"atzer and \init{R.~}Quackenbush~\cite[Lemma 2.1]{friedggq}, the containment $\pair ab \in\cgi {A}c d$ is witnessed by a system of certain equalities among terms applied for certain elements of $A$. Since $f$ preserves these equalities, $\pair {f(a)}{f(b)} \in\cgi {B}{f(c)}{f(d)}$, as required.
\end{proof}

It follows from Remark~\ref{remarkzetafAB} that
\begin{equation}
\begin{aligned}
\fprinc\colon \latasdf &\to \bposets, \text{defined by }\cr 
X&\mapsto \princ X\text{ for }X\in \obj{\latasdf}\text{ and }\cr
f &\mapsto \zetaf fXY(f)\text{ for }f\in\fmor(X,Y),
\end{aligned}
\label{eqrfuncPrincDf}
\end{equation}
is a functor. 
Note that $\fprinc$ could similarly be defined with $\alllat$ or  $\latf$ as its domain category. 
Prior to Definition~\ref{defgilwehr}, it is reasonable to remark the following.

\begin{remark}\label{remarkepisurjmonoinj}
In $\bposets$, the monomorphisms and the epimorphisms are exactly the injective maps and the surjective maps, respectively. Hence, the isomorphisms in category theoretical sense are  precisely  the isomorphisms in order theoretical sense. 
\end{remark}

\begin{proof} It is well-known that an injective map is a monomorphism and a surjective map is an epimorphism. To prove the converse, assume that $f\colon X\to Y$ is a non-injective morphism in $\bposets$. Pick $x_1\neq x_2\in X$ such that $f(x_1)= f(x_2)$, and let $Z=\set{0\prec z\prec 1}$ be the three-element chain. Define the $\set{0,1}$-preserving monotone map $g_i\colon Z\to X$ by the rule $g_i(z)=x_i$. Since $g_1\neq g_2$ but  $f\circ g_1=f\circ g_2$, $f$ is not injective. 
Next, assume that $f\colon X\to Y$ is a non-surjective morphism of $\bposets$,  pick an $y\in Y\setminus f(X)$, and pick two elements,  $y_1$ and $y_2$, outside $Y$. On the set $Y':=(Y\setminus\set{y})\cup\set{y_1,y_2}$, define the ordering relation by the rule $u < v$ iff either $\set{u,v}\cap\set{y_1,y_2}=\emptyset$ and $u<_Y v$, or $u=y_i$ and $y<_Y v$, or $v=y_i$ and $u<_Y y$ for some $i\in\set{1,2}$. Note that $y_1$ and $y_2$ are incomparable, in notation, $y_1\parallel y_2$ in $Y'$. Let $g_i\colon Y\to Y'$ be defined by $u\mapsto u$ if $u\neq y$ and $y\mapsto y_i$. Then $g_1,g_2\in\mor{\bposets}$, $g_1\circ f=g_2\circ f$ but $g_1\neq g_2$, showing that $f$ is not an epimorphism.  
\end{proof}

\begin{definition}\label{defgilwehr}
Let $\cat A$ be a small category and let $\posetfunct\colon \cat A\to \bposets$. Following Gillibert and Wehrung~\cite{gillibertwehrung}, we say
that a functor 
\[\liftfunct\colon  \cat A  \to \latasdf\qquad\text{or}\qquad \liftfunct\colon \cat A \to \latf
\]
\emph{lifts the functor $\posetfunct$  with respect to the functor $\fprinc$}, if 
$\posetfunct$ is naturally isomorphic  to the composite functor $\fprinc {} \circ \liftfunct$. 
\end{definition}

Note that the existence of $\liftfunct\colon \cat A \to \latasdf$ above is a stronger requirement than the existence of $\liftfunct\colon \cat A \to \latf$.  
Every ordered set $\tuple{P;\leq}$ can be viewed as a small category whose objects are the elements of $P$ and, for $X,Y\in P$, $|\fmor (X,Y)|=1$ for $X\leq Y$ and $|\fmor (X,Y)|=0$ for $X\nleq Y$. Small categories obtained in this way are called \emph{categorified posets}.
Based on Remark~\ref{remarkepisurjmonoinj}, the known results on representations of monotone maps by principal congruences can be stated in the following two propositions.

\begin{proposition}[\init{G.~}Cz\'edli~\cite{czginjlatcat}]\label{proposinjcatrepr}
Let $\cat A$ be a categorified poset. If $\posetfunct\colon \cat A\to \bposets$ is a functor such that $\posetfunct(f)$ is \emph{injective} for all $f\in\mor{\cat A}$, then there exists a functor 
$\liftfunct\colon  \cat A \to \latasdf$ that lifts $\posetfunct$  with respect to $\fprinc$.
\end{proposition}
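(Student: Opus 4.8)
The plan is to read $\posetfunct$ as a diagram of bounded ordered sets indexed by the underlying ordered set of the categorified poset $\cat A$. Specifying $\posetfunct$ amounts to a choice, for each object $p\in\obj{\cat A}$, of a bounded ordered set $\posetfunct(p)\in\obj{\bposets}$ and, for each comparison $p\leq q$, of the unique morphism $\posetfunct(p\leq q)\colon\posetfunct(p)\to\posetfunct(q)$, subject to functoriality. By hypothesis each $\posetfunct(p\leq q)$ is injective, hence a monomorphism by Remark~\ref{remarkepisurjmonoinj}. The goal is to produce, for every $p$, a selfdual lattice $\liftfunct(p)$ of length $5$ together with an isomorphism $\xi_p\colon\posetfunct(p)\to\princ{\liftfunct(p)}$, and, for every $p\leq q$, a $\set{0,1}$-preserving homomorphism $\liftfunct(p\leq q)$ whose image under $\fprinc$ corresponds, via $\xi_p$ and $\xi_q$, to $\posetfunct(p\leq q)$.

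For the object part I would invoke the quasi-colored lattice toolkit of \cite{czginjlatcat}, refining Gr\"atzer's representation recalled in Section~\ref{seclatintro}. To a bounded ordered set $Q$ one assigns a selfdual lattice of length $5$ whose covering pairs are \emph{colored} by the elements of $Q$, in such a way that two covering pairs generate comparable principal congruences exactly when their colors are comparable in $Q$; the coloring then induces the required isomorphism between $Q$ and the ordered set of principal congruences of the constructed lattice. Concretely, each element of $Q$ is encoded by a gadget contributing a single principal congruence, the gadgets are glued along a common $\set{0,1}$-preserving skeleton so that containment among the generated principal congruences reproduces the order of $Q$, and the whole is arranged symmetrically to be selfdual of length exactly $5$. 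Applying this with $Q=\posetfunct(p)$ defines $\liftfunct(p)$ and the component $\xi_p$.

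For the morphism part, fix $p\leq q$ and abbreviate $\phi=\posetfunct(p\leq q)$. Since $\phi$ is injective, every colored covering pair of $\liftfunct(p)$ may be routed to a covering pair of $\liftfunct(q)$ whose color is $\phi$ applied to the original color; propagating this prescription across the shared skeleton yields a $\set{0,1}$-preserving lattice homomorphism $\liftfunct(p\leq q)\colon\liftfunct(p)\to\liftfunct(q)$. By the description of $\fprinc$ in \eqref{eqrfuncPrincDf}, the induced map on principal congruences is exactly color-transport along $\phi$, so the square $\xi_q\circ\phi=\fprinc(\liftfunct(p\leq q))\circ\xi_p$ commutes. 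Because this color-tracking prescription is visibly compatible with composing the injective maps $\phi$, we also get $\liftfunct(p\leq r)=\liftfunct(q\leq r)\circ\liftfunct(p\leq q)$ for $p\leq q\leq r$, so $\liftfunct$ is a functor and $\set{\xi_p}$ is a natural isomorphism $\posetfunct\to\fprinc\circ\liftfunct$.

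I expect the morphism part to be the main obstacle: one must produce honest lattice homomorphisms realizing the prescribed injective maps on principal congruences while simultaneously preserving self-duality, pinning the length at $5$, and respecting composition. The injectivity hypothesis is exactly what makes this possible, since a color collision $\phi(c)=\phi(c')$ with $c\neq c'$ would demand that a single homomorphism identify two distinct principal congruences of $\liftfunct(p)$, which a faithfully colored target lattice cannot accommodate. This is precisely where, for non-injective $\posetfunct$, the cometic functor of the present paper must intervene.
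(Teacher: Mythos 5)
Your proposal is correct in outline, but it is not the route this paper takes: here Proposition~\ref{proposinjcatrepr} is quoted as a known result of \cite{czginjlatcat} and is then observed to be a particular case of Theorem~\ref{thmlat}, since every morphism of a categorified poset is a monomorphism and every functor defined on it is automatically faithful. What you sketch is essentially the direct argument of \cite{czginjlatcat}, whose ingredients this paper re-develops in Sections~\ref{secquasicolor} and \ref{secfromquasitohomo}: your object construction is the quasi-colored lattice $\Nla(Q,\leq)$ of Definition~\ref{defLofquord} together with Lemma~\ref{keylemma}, your isomorphism $\xi_p$ is Corollary~\ref{corolxilemma}, and your morphism construction is exactly Lemma~\ref{lemmahomomain}; note that the functoriality of $\liftfunct$, which you wave through as ``visibly compatible with composition'', is cleanest to obtain from the \emph{uniqueness} clause of that lemma, as the paper does in the proof of Theorem~\ref{thmlat}. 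Your direct route works precisely because $\posetfunct(f)$ is injective: the color sets are genuine ordered sets, the injective $\set{0,1}$-preserving maps preserve the ranks of pairs, and so every gadget is carried isomorphically onto a gadget of the same rank; the cometic functor is then indeed superfluous. What the paper's longer route buys is generality: Theorem~\ref{thmlat} needs only that the morphisms of $\cat A$ be monomorphisms and that $\posetfunct$ be faithful, and there the cometic functor is inserted to convert monomorphisms into injective maps, at the price that rank-collapsing gadget projections become necessary in Lemma~\ref{lemmahomomain}. One imprecision in your closing remark: the obstruction to non-injective $\phi$ is not that a lattice homomorphism ``cannot identify two distinct principal congruences''---$\fprinc$ of a homomorphism may well be non-injective on principal congruences, which is exactly what Proposition~\ref{propGGonehom} and Theorem~\ref{thmlat} exploit---but that in these particular lattices any $\set{0,1}$-preserving $g$ with $g(a_p)=a_{\phi(p)}$ and $g(b_p)=b_{\phi(p)}$ would, when $\phi(p)=\phi(q)$ with $p\neq q$, have a kernel collapsing $a_p$ and $a_q$, hence equal to the full congruence, contradicting $g(0)=0\neq 1=g(1)$; see the remark following Lemma~\ref{lemmahomomain}.
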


Note that \cite{czginjlatcat} extends the result of \init{G.~}Cz\'edli~\cite{czgsingleinjectiveprinc}, in which $\cat A$ is the categorified two-element chain but $F(f)$ is still injective. As another extension of \cite{czgsingleinjectiveprinc}, \init{G.~}Gr\"atzer dropped the injectivity in  the following statement, which we translate to our terminology as follows.

\begin{proposition}[\init{G.~}Gr\"atzer~\cite{gghomoprinc}]\label{propGGonehom}
If $\cat A$ is the categorified two-element chain, then  for every functor $\posetfunct\colon \cat A\to \bposets$, there exists a functor 
$\liftfunct\colon  \cat A \to \latf$ that lifts $\posetfunct$  with respect to $\fprinc$.
\end{proposition}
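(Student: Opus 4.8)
The plan is to exploit that the categorified two-element chain $\cat A$ carries, apart from the two identities, a single morphism $u$, so that a functor $\posetfunct\colon\cat A\to\bposets$ is nothing but one $\set{0,1}$-preserving monotone map $\alpha:=\posetfunct(u)\colon P_0\to P_1$ between the two bounded ordered sets $P_0=\posetfunct(0)$ and $P_1=\posetfunct(1)$. Lifting $\posetfunct$ therefore amounts to producing two lattices $L_0,L_1$ of length $5$, a $\set{0,1}$-preserving homomorphism $\phi\colon L_0\to L_1$, and order isomorphisms $P_i\cong\princ{L_i}$ for $i\in\set{0,1}$ that carry $\alpha$ to $\zetaf{\phi}{L_0}{L_1}$.

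First I would dispose of the case in which $\alpha$ is injective: the two-element chain is a categorified poset, so Proposition~\ref{proposinjcatrepr} already furnishes a lift $\liftfunct\colon\cat A\to\latasdf\subseteq\latf$, and there is nothing more to prove. Hence the whole difficulty is concentrated in the non-injective case, and the idea is to reduce it to the injective one with the apparatus of Section~\ref{secmorecelestial}.

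To do so, I would regard the two underlying sets of $P_0,P_1$ together with the set map $\alpha$ as a small concrete category $\cat C$ with two objects (taking disjoint copies of the two sets should $P_0$ and $P_1$ happen to coincide), in which $\alpha$ is the only non-identity morphism; since the source of $\alpha$ receives no morphism other than its identity, $\alpha$ is a monomorphism of $\cat C$. By Theorem~\ref{thmcat}\eqref{thmcatAb}, the cometic functor $\celfunct=\ccelfunct{\cat C}$ then turns $\alpha$ into an \emph{injective} map $\celfunct(\alpha)\colon\celfunct(P_0)\to\celfunct(P_1)$, while the cometic projection supplies surjections $\xceltransf{P_i}\colon\celfunct(P_i)\to P_i$ with right inverses $\trivcom_{P_i}$, see \eqref{eqrtrrtrainv}. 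Next I would equip each set $\celfunct(P_i)$ with a bounded order for which $\celfunct(\alpha)$ is an injective monotone map and each $\xceltransf{P_i}$ is a $\set{0,1}$-preserving monotone surjection; this yields an auxiliary functor $G\colon\cat A\to\bposets$ with injective transition map, to which Proposition~\ref{proposinjcatrepr} applies and produces length-$5$ lattices $M_0,M_1$ with $\princ{M_i}\cong\celfunct(P_i)$ together with a homomorphism $\psi\colon M_0\to M_1$ lifting $\celfunct(\alpha)$.

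The hard part will be the descent from this representation of $\celfunct(\alpha)$ to one of $\alpha$ itself along the surjections $\xceltransf{P_i}$. I expect this to be the main obstacle, because a surjective monotone map $\princ{M_i}\cong\celfunct(P_i)\to P_i$ is not, in general, induced by any lattice homomorphism, and the section $\trivcom$ is not a natural transformation, so one cannot simply read off the desired $L_i$ and $\phi$ from the $M_i$ and $\psi$. To surmount it I would instead build $L_0,L_1$ by an explicit quasi-colored construction of the kind prepared in Section~\ref{secquasicolor}: color the prime intervals of a length-$5$ lattice by the elements of $P_i$ so that $\princ{L_i}\cong P_i$, and define $\phi$ on generators so that a prime interval colored $p$ is mapped to one colored $\alpha(p)$, several colors being allowed to merge---which is precisely where the non-injectivity of $\alpha$ is absorbed. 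The cometic detour is what guarantees that these coloring choices can be made coherently and, in particular, that $\zetaf{\phi}{L_0}{L_1}$ agrees with $\alpha$ under the isomorphisms $P_i\cong\princ{L_i}$; checking this agreement, together with the fact that the constructed lattices have length exactly $5$, is the laborious but routine remainder. (The statement is due to \init{G.~}Gr\"atzer~\cite{gghomoprinc}, whose direct construction realizes the same plan without the categorical packaging.)
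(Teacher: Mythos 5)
Your opening moves match the strategy this paper actually uses: the proposition is quoted from Gr\"atzer~\cite{gghomoprinc} and then re-obtained as a particular case of Theorem~\ref{thmlat} (morphisms of a categorified poset are monomorphisms, and faithfulness is automatic when hom-sets are singletons), and the proof of Theorem~\ref{thmlat} does begin exactly as you do, by composing $\posetfunct$ with the cometic functor so that, by Theorem~\ref{thmcat}, the resulting map $\compfunct(f)$ is injective. The gap is in your descent step, and both of the ways you propose to close it fail. Putting a \emph{partial} order on $\celfunct(P_i)$ and invoking Proposition~\ref{proposinjcatrepr} produces lattices $M_i$ with $\princ{M_i}\cong\celfunct(P_i)$, which is not $P_i$ and cannot be pushed down along the surjection $\xceltransf{P_i}$ (as you yourself note). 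Your fallback---color the prime intervals by elements of $P_i$ and let $\phi$ send a $p$-colored edge to an $\alpha(p)$-colored one, ``merging'' colors---is precisely what the lattices of Section~\ref{secquasicolor} forbid: by the remark following Lemma~\ref{lemmahomomain}, a $\set{0,1}$-preserving homomorphism $g$ with $g(a_p)=a_{\alpha(p)}$ and $g(b_p)=b_{\alpha(p)}$ for a non-injective $\alpha$ would have a kernel collapsing some $a_p\neq a_q$, hence the kernel would be the largest congruence, contradicting $g(0)=0\neq1=g(1)$. So the step where you say the non-injectivity ``is absorbed'' is exactly the step that breaks.

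The missing idea is that non-injectivity is absorbed by \emph{quasiorders}, not by color-merging homomorphisms. One never orders $\celfunct(P_i)$ at all; instead one equips the set $\compfunct(X)$ of eligible triplets with the quasiorder $\nu_X$ pulled back along the cometic projection, see \eqref{eqrefnuprimedef}. This relation is genuinely not antisymmetric: distinct triplets lying over the same element of $P_i$ become $\Theta_{\nu_X}$-equivalent, and the quotient $\tuple{\compfunct(X);\nu_X}/\Theta_{\nu_X}$ is order-isomorphic to $P_i$. The quasi-colored lattice $\Nla(\compfunct(X),\nu_X)$ of Definition~\ref{defLofquord} then satisfies $\princ{\Nla(\compfunct(X),\nu_X)}\cong\tuple{\compfunct(X);\nu_X}/\Theta_{\nu_X}\cong P_i$ by Corollary~\ref{corolxilemma}; in other words, the descent you are looking for is built into the isomorphism between $\fprinc$ and the $\Theta_\nu$-quotient, at the level of colors rather than of lattices. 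The homomorphism $\liftfunct(f)$ is supplied by Lemma~\ref{lemmahomomain} applied to the \emph{injective} monotone map $\compfunct(f)$, so no generators are ever merged; instead, two distinct colors $c_1\neq c_2$ in the image that are $\Theta_{\nu_Y}$-equivalent generate the same principal congruence by (C1), and this is how the induced map $\zetaf{\liftfunct(f)}{\liftfunct(X)}{\liftfunct(Y)}$ becomes non-injective. Without this quasiorder mechanism (or, alternatively, without reproducing Gr\"atzer's quite different direct construction from \cite{gghomoprinc}), your sketch does not yield the proposition.
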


Equivalently, in a simpler language and using the notation given in \eqref{eqrefsljGfB}, Proposition~\ref{propGGonehom} asserts that  if $X_1$ and $X_2$ are nontrivial bounded ordered sets and $f\colon X_1\to X_2$ is a $\set{0,1}$-preserving monotone map, then there exist lattices $L_1$ and $L_2$ of length 5, order isomorphisms $\tautr_{X_i}\colon \princ{L_i}\to X_i$ for $i\in\set{1,2}$, 
and a $\set{0,1}$-preserving lattice homomorphism $g\colon L_1\to L_2$ such that the diagram
\begin{equation*}
\begin{CD}
\princ{L_1}  @>{ \zetaf g{L_1}{L_2}}>> \princ{L_2} \\
@V{\tautr_{X_1}}VV   @V{\tautr_{X_2}}VV   \\
X_1 @>{f}>>  X_2
\end{CD}
\end{equation*}
is commutative,
that is, $f= \tautr_{X_2} \circ  \zetaf g{L_1}{L_2} \circ \tautr_{X_1}^{-1}$.

Now we are in the position to formulate the second theorem of the paper.

\begin{theorem}\label{thmlat}
Let $\cat A$ be a small category such that 
each $f\in \mor{\cat A}$ is a monomorphism.  Then for every faithful functor $\posetfunct\colon \cat A\to \bposets$,
there exists a faithful functor 
\[
\liftfunct\colon  \cat A \to \latasdf
\]
that lifts $\posetfunct$  with respect to $\fprinc$. Furthermore, if $\posetfunct$ is totally faithful, then there is a totally faithful $\liftfunct$ that lifts $\posetfunct$ with respect to $\fprinc$.
\end{theorem}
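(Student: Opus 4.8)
The plan is to use the cometic functor of Theorem~\ref{thmcat} to trade the possibly non-injective monotone maps $\posetfunct(f)$ for genuinely injective ones, and then to represent the resulting injective situation by self-dual length-$5$ lattices, using the \emph{surjectivity} of the cometic projection to guarantee that it is $\posetfunct$ itself, rather than the enlarged functor, that gets lifted. First I would realize $\cat A$ as a small concrete category: since $\posetfunct$ is faithful, tagging each object by its underlying set, $\hat X=\set{X}\times\posetfunct(X)$, and letting each $f\in\fmor_{\cat A}(X,Y)$ act as $\pair X p\mapsto\pair Y{\posetfunct(f)(p)}$, yields a small concrete category isomorphic to $\cat A$ (faithfulness gives injectivity on hom-sets, tagging separates distinct hom-sets). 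Under a categorical isomorphism monomorphisms correspond, so in this concrete copy every morphism is still a monomorphism.

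Applying the cometic functor $\celfunct=\ccelfunct{\cat A}$ and invoking Theorem~\ref{thmcat}, parts \eqref{thmcatAa} and \eqref{thmcatAb}, I obtain a totally faithful functor $\celfunct\colon\cat A\to\secatt$ whose value $\celfunct(f)$ on \emph{every} morphism is an injective map (this is exactly where the hypothesis that each $f\in\mor{\cat A}$ is a monomorphism is used in full), together with the cometic projection $\celtransf\colon\celfunct\to\eidfunct{\cat A}\secatt$, a natural transformation whose components $\xceltransf X$ are surjective onto $\posetfunct(X)$ and send a triplet to its third component. The crucial feature is that the triplets of $\celfunct(X)$ carry the extra bookkeeping (the morphism together with a source element, combined by the law $\tuple{f,x,y}\mapsto\tuple{g\circ f,x,g(y)}$) that makes the associated maps injective and records composites coherently, while $\xceltransf X$ projects everything back onto the original colour $\posetfunct(X)$.

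Next I would feed this into the lattice machinery of the subsequent sections. Using $\xceltransf X$ to colour prime intervals by the elements of $\posetfunct(X)$, I would build for each $X$ a self-dual lattice $\liftfunct(X)$ of length $5$ whose principal congruences are indexed, via the projection, by $\posetfunct(X)$, so that the functoriality of $\fprinc$ (Remark~\ref{remarkzetafAB}) and the construction produce order isomorphisms $\xtautr X\colon\princ{\liftfunct(X)}\to\posetfunct(X)$ assembling into a natural isomorphism $\tautr\colon\fprinc\circ\liftfunct\to\posetfunct$; for each $f$ the homomorphism $\liftfunct(f)$ is read off from the injective map $\celfunct(f)$, and $\zetaf{\liftfunct(f)}{\liftfunct(X)}{\liftfunct(Y)}$ reduces under the $\xtautr{}$'s to $\posetfunct(f)$ precisely because a principal congruence sees only the projected third component. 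This generalizes Proposition~\ref{proposinjcatrepr} from categorified posets to arbitrary small categories, now legitimate because the cometic functor has made all the morphism-maps injective.

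Faithfulness is then inherited: $\celfunct$ is totally faithful and the passage $\celfunct(f)\rightsquigarrow\liftfunct(f)$ is injective on morphisms, so $f\neq g$ forces $\liftfunct(f)\neq\liftfunct(g)$; building the ground set of each $\liftfunct(X)$ over $X$-tagged elements makes distinct objects give distinct lattices, which upgrades this to total faithfulness when $\posetfunct$ is totally faithful. The main obstacle is the lattice-theoretic core of the previous paragraph: explicitly constructing self-dual, length-$5$, quasi-coloured lattices that simultaneously represent each $\posetfunct(X)$ as its poset of principal congruences and realize \emph{every} morphism $\posetfunct(f)$ — including non-injective ones and nontrivial endomorphisms — by a single $\set{0,1}$-preserving homomorphism, functorially in $f$ and compatibly with the isomorphisms $\xtautr X$. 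The cometic functor removes the non-injectivity obstruction (replacing $\posetfunct(f)$ by the injective $\celfunct(f)$) and the surjectivity of $\celtransf$ guarantees the congruence posets collapse back to the original $\posetfunct(X)$; what remains, and occupies the rest of the paper, is the concrete self-dual gadgetry and the verification of functoriality and of the commuting squares.
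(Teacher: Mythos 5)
Your proposal is correct and follows essentially the same route as the paper's own proof: the paper likewise concretizes $\cat A$ by a tagging trick (its totally faithful ``forgetful'' functor $\fofunct$ applied to $\posetfunct(\cat A)$, versus your tagging by object names, an immaterial difference), applies the cometic functor so that every morphism becomes an injective map, pulls the order of $\posetfunct(X)$ back along the surjective cometic projection to a quasiorder $\nu_X$ on the comet set $\compfunct(X)$, takes $\liftfunct(X)=\Nla(\compfunct(X);\nu_X)$, and lifts each injective monotone map $\compfunct(f)$ to the unique $\set{0,1}$-preserving homomorphism supplied by Lemma~\ref{lemmahomomain}. The lattice-theoretic core you defer to is exactly the machinery of Lemma~\ref{keylemma}, Lemma~\ref{lemmahomomain}, and Corollary~\ref{corolxilemma}, and your ``a principal congruence sees only the projected third component'' is precisely how the paper verifies commutativity of the naturality squares.
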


Observe that Propositions~\ref{proposinjcatrepr} and \ref{propGGonehom}  are particular cases
Theorem~\ref{thmlat}, since every morphism of a categorified poset is a monomorphism and the functors in these statements are automatically faithful. To avoid the feeling that Proposition~\ref{propGGonehom} or a similar situation is the only case where Theorem~\ref{thmlat} takes care of non-injective monotone maps, we give an example.

\begin{example}\label{exAmplesubcatE} 
Let $D_1,D_2\subseteq \obj{\bposets}$ such that $D_1$ and $D_2$ are sets and $D_1$ is nonempty. We define a 
 small  category $\cat A=\cat A(\bposets, D_1,D_2)$ by the equalities $\obj{\cat A}=D_1\cup D_2$ and 
\begin{equation}
\begin{aligned}
\mor{\cat A}=\{&f\in\fmor_{\bposets}(X,Y): \text{either }X,Y\in D_1\text{ and }f\text{ is a}\cr
&\text{monomorphism in }\bposets,\text{ or } X\in D_2,\,\, Y\in D_1 \}\text.
\end{aligned}
\label{excatDef}
\end{equation}
Then all morphisms of $\cat A$ are monomorphisms in $\cat A$ but, in general, many of them are not injective. The same is true for all subcategories of $\cat A$. (Note that the same holds if we starts from a variety of general algebras rather than from $\bposets$.) By Remark~\ref{remarkepisurjmonoinj}, we can replace ``monomorphism'' by ``injective'' in the second line of \eqref{excatDef}.) Now if $\posetfunct\colon \cat A\to \bposets$ 
is the inclusion functor 
$\eidfunct{ \cat A}\bposets$, see \eqref{eqrinclusionfunctor}, then  Theorem~\ref{thmlat} yields a totally faithful functor 
$\liftfunct\colon  \cat A \to \latasdf$ that lifts $\posetfunct$  with respect to $\fprinc$.
\end{example}

\begin{proof} If $f\circ g_i$ is defined in $\cat A$, then $f$ is a monomorphism in $\bposets$.
\end{proof}

\begin{example}
In a self-explanatory simpler (but less precise) language, we mention two particular cases of Example~\ref{exAmplesubcatE}. First, we 
can represent all automorphisms of a bounded ordered set simultaneously by principal congruences. Second, if we are given two distinct bounded ordered sets $X$ and $Y$, then we can simultaneously represent all $\set{0,1}$-preserving monotone $X\to Y$ maps by principal congruences. 
\end{example}

\section{Quasi-colored lattices and a toolkit for them}\label{secquasicolor}
\subsection{Gadgets and basic facts}
We follow the terminology of \init{G.~}Cz\'edli~\cite{czginjlatcat}. 
If $\nu$ is a quasiorder, that is, a reflexive transitive relation, then $\pair xy\in\nu$ will occasionally be abbreviated as $x\leqnu y$. 
For a lattice or ordered set  $L=\tuple{L;\leq}$ and $x,y\in L$, $\pair x y$ is called an \emph{ordered pair} of $L$ if $x\leq y$. If $x=y$, then  $\pair x y$  is a \emph{trivial ordered pair}. The set of ordered pairs of $L$ is denoted by $\pairs L$. If $X\subseteq L$, then $\pairs X$ will stand for $X^2\cap\pairs L$.
We also need the notation $\covpairs L:=\set{\pair xy\in \pairs X: x\prec y}$. 
By a \emph{quasi-colored  lattice} we mean a structure 
\[\alg L=\tuple{L, \leq;\gamma;H,\nu}
\]
where $\tuple{L;\leq}$ is a lattice, $\tuple{H;\nu}$ is a quasiordered set, $\gamma\colon \pairs L\to H$ is a surjective map, and for all $\pair{u_1}{v_1},\pair{u_2}{v_2}\in \pairs L$, 
\begin{enumerate}[\quad \normalfont({C}1)]
\item\label{labqa} if $\gamma(\pair{u_1}{v_1})\leqnu \gamma(\pair{u_2}{v_2})$, then $\cg{u_1}{v_1}\leq \cg{u_2}{v_2}$ and 
\item\label{labqb} if $\cg{u_1}{v_1}\leq \cg{u_2}{v_2}$, then $\gamma(\pair{u_1}{v_1})\leqnu \gamma(\pair{u_2}{v_2})$.
\end{enumerate}
This concept is taken from  \init{G.~}Cz\'edli~\cite{czginjlatcat}; see  
\init{G.\ }Gr\"atzer, \init{H.\ }Lakser, and \init{E.T.\ }Schmidt~\cite{grlaksersch}, \init{G.\ }Gr\"atzer~\cite[page 39]{grbypict}, and  \init{G.~}Cz\'edli~\cite{czgrepres} and \cite{czgprincc} for the evolution of this concept.
The importance of quasi-colored lattices in the present paper will be made clear in Lemma~\ref{lemmaQCOLisoPrinc} and Corollary~\ref{corolxilemma}.
It follows easily from (C\ref{labqa}), (C\ref{labqb}), and the surjectivity of $\gamma$ that if $\tuple{L,\leq;\gamma;H,\nu}$ is a quasi-colored bounded lattice, then 
$\tuple{H;\nu}$ is a  quasiordered set with a least element and a greatest element; possibly with many least elements and many greatest elements. 
For $\pair x y\in L$, $\gamma(\pair x y)$ is called the \emph{color} (rather than the quasi-color) of $\pair x y$, and we say that $\pair x y$ is colored (rather than quasi-colored) by $\gamma(\pair x y)$. The following convention applies to all of our figures that contain 
thick edges and, possibly, also thin edges: if $\gamma$ is a quasi-coloring, then for an  ordered pair~$\pair x y$, 
\begin{equation}
\gamma(\pair x y)=
\begin{cases}
0,&\text{iff } x=y,\cr
u,&\text{if }x\prec y\text{ is a thin edge labeled by }u,\cr
1,&\text{if the interval }[x,y]\text{ contains is a thick edge,}\cr
\gamma(\pair{x'}{y'}),&\text{if  }[x,y]\text{ and }[x',y']\text{ are transposed intervals.}
\end{cases}
\label{fiGrSgy}
\end{equation}
Based on this convention, our figures determine the corresponding quasi-colorings. 

The quasi-colored lattice 
\[\apupgad 2(p,q) :=\tuple{\pupgad 2(p,q),\lambda\up_{2 p q};\gamma\up_{2 p q};H_2(p,q),\nu_{2 p q}}
\]
in Figure~\ref{fig-uptwogadget},  taken from \init{G.~}Cz\'edli~\cite{czginjlatcat} where it was denoted by $\agad\up(p,q)$,  is our \emph{upward gadget of type $2$}.
Its quasi-coloring is defined by \eqref{fiGrSgy}; note that $\gamma\up_{2 p q}(\pair{\cpq4}{\dpq4})=q$.

\begin{figure}[htc]
\centerline
{\includegraphics[scale=1.0]{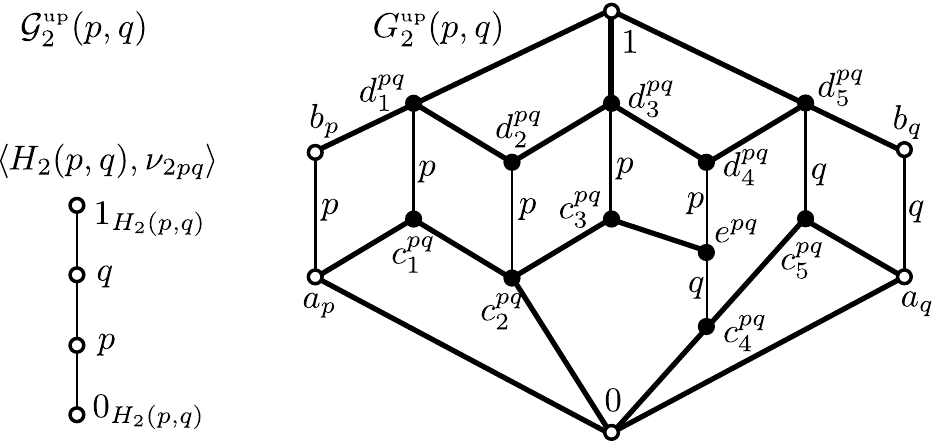}}
\caption{The upward gadget of rank 2\label{fig-uptwogadget}}
\end{figure}

Using the quotient lattices  
\begin{equation}
\begin{aligned}
\pupgad 0(p,q)&:=\pupgad 2(p,q)/\cg{a_q}{b_q}  \text{ and}\cr
\pupgad 1(p,q)&:=\pupgad 2(p,q))/\cg{a_p}{b_p},
\end{aligned}
\label{refeqlowrankgad} 
\end{equation}
we also define the gadgets 
\[\begin{aligned}
\apupgad 0(p,q) &:=\tuple{\pupgad 0(p,q),\lambda\up_{0 p q};\gamma\up_{0 p q};H_0(p,q),\nu_{0 p q}}\text{ and} \cr
\apupgad 1(p,q) &:=\tuple{\pupgad 1(p,q),\lambda\up_{1 p q};\gamma\up_{1 p q};H_1(p,q),\nu_{1 p q}}
\end{aligned}
\]
of rank $0$ and rank $1$, respectively; see Figures~\ref{fig-uponegadget} and  \ref{fig-upnulgadget}. 
Note that the rank is 
$\length{([a_p,b_p])}+\length{([a_q,b_q])}$.
We obtain the \emph{downward gadgets} 
$\apdngad 2(p,q)$, $\apdngad 1(p,q)$, and $\apdngad 0(p,q)$ of ranks $2$, $1$, and $0$ from the corresponding upward gadgets by dualizing; 
see \init{G.~}Cz\'edli~\cite[\dslRtBxB]{czginjlatcat}. Instead of  $\dpq{ij}$ and, if applicable, 
$\cpq{ij}$ and $\epq$, their elements are denoted by  $\ddpq{ij}$,  $\dcpq{ij}$, and $\depq$; see  \cite{czginjlatcat}.  By a \emph{single gadget} we mean an upper or lower gadget. 
The adjective ``upper'' or ``lower'' is the \emph{orientation} of the gadget. 
A single gadget of rank $j$ without specifying its orientation  is denoted by $\pqmgad j (p, q)$.  

\begin{figure}[htc]
\centerline
{\includegraphics[scale=1.0]{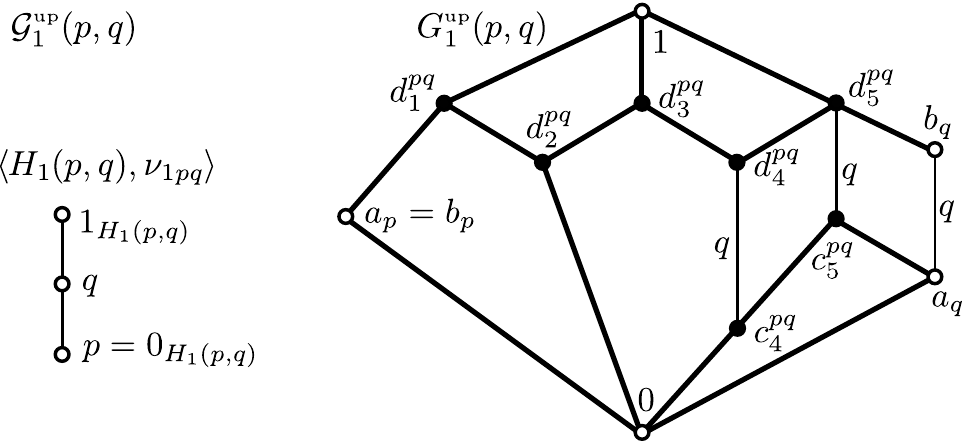}}
\caption{The upward gadget of rank 1  \label{fig-uponegadget}}
\end{figure}

\begin{figure}[htc]
\centerline
{\includegraphics[scale=1.0]{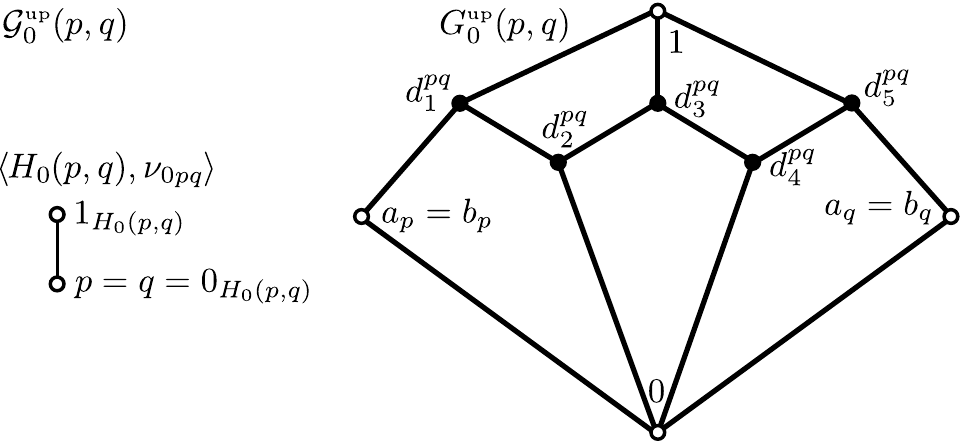}}
\caption{The upward gadget of rank 0    \label{fig-upnulgadget}}
\end{figure}

In case of all  our gadgets $\pqmgad j (p, q)$, we automatically assume that $p\neq q$. Also, we always assume that, for  $i,j\in\set{0,1,2}$, ordered pairs $\pair p q,\pair uv$, and strings $\textup{s}, \textup{t}\in\set{\textup{up}, \textup{dn}}$ 
such that $\tuple{p,q,i,\textup{s}} \neq  \tuple{u,v,j,\textup{t}}$,    
\begin{equation} 
\parbox{6.5cm}{the intersection of  $\pstrgad ti(p,q)$ and   $\pstrgad sj(u,v)$ is as small as it follows from the notation.}
\label{gadgetconV}
\end{equation} 
This convention allows us to form the union  
$\adbgad i (p,q)$ of 
$\apupgad i(p,q)$ and $\apdngad i(p,q)$, for $i\in\set{0,1,2}$, which we call a \emph{double gadget} of rank $i$. While $\adbgad 1 (p,q)$
and $\adbgad 0 (p,q)$ are given in Figures~\ref{fig-dbleonegadget} and \ref{fig-dblenulgadget}, the double gadget $\adbgad 2 (p,q)$ of rank 2 is depicted in \init{G.~}Cz\'edli~\cite[Figure 4]{czginjlatcat}. Observe that all the thin edges are $q$-colored in  $\adbgad 1 (p,q)$ and, in lack of thin edges, all the edges are $1$-colored in $\adbgad 0 (p,q)$. 
For $i\in\set{0,1,2}$,   $\dbgad i(p,q)$ is a selfdual lattice; we will soon point out that 
 $\adbgad i(p,q)$ is a quasi-colored lattice. 

\begin{figure}[htc]
\centerline
{\includegraphics[scale=1.0]{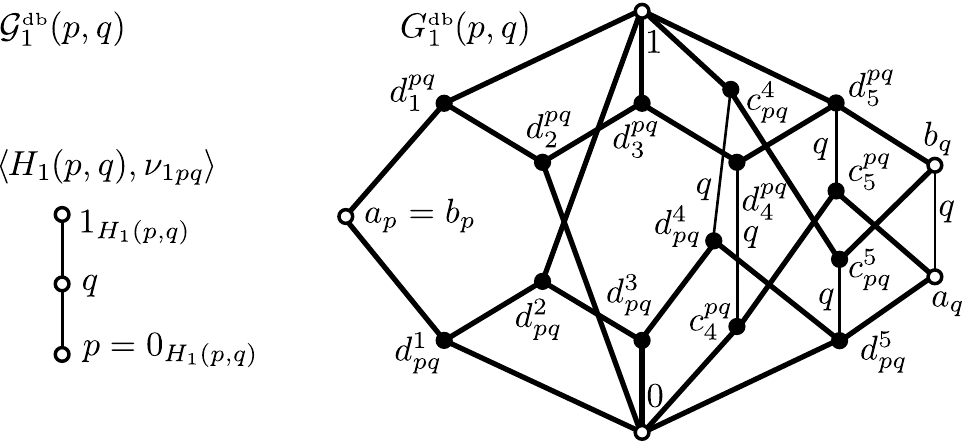}}
\caption{The double gadget of rank 1    \label{fig-dbleonegadget}}
\end{figure}

\begin{figure}[htc]
\centerline
{\includegraphics[scale=1.0]{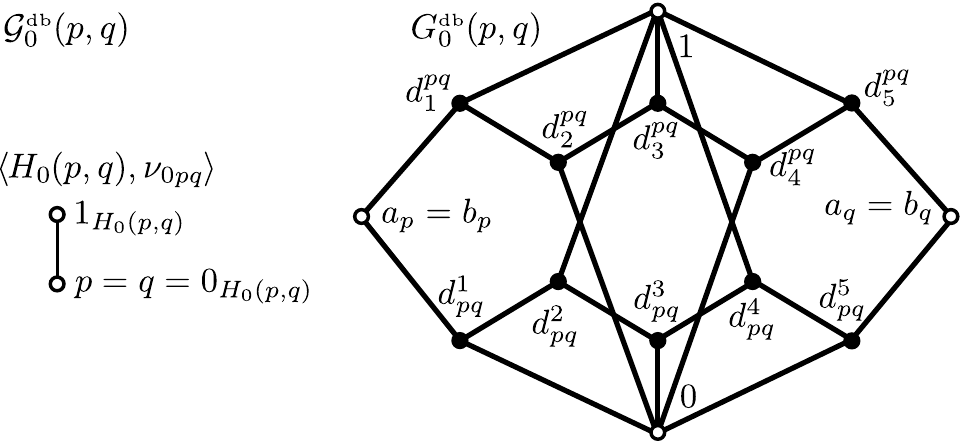}}
\caption{The double gadget of rank 0    \label{fig-dblenulgadget}}
\end{figure}

For $S\subseteq X\times X$, the least quasiorder including $S$ is denoted by $\quos S=\iquos XS$; we write $\quo a b$ rather than $\quos{\set{\pair a b}}$. 

\begin{lemma}\label{hinsgadlemma}
Assume that  $L=\tuple{L; \leq_L}=\tuple{L; \lambda_L}$ is a lattice of length 5,  and let $0<a_p\preceq b_p<1$ and $0<a_q\preceq b_q<1$ in $L$ such that, with $j:= \length([a_p,b_p]) + \length([a_q,b_q])$, 
\allowdisplaybreaks{
\begin{align*}
a_p\vee_L a_q=1\quad b_p\wedge_L b_q=0,\quad 
L\cap \pupgad j(p,q)=\set{0,a_p, b_p, a_q, b_q,1},  \cr
0\leq \length([a_p,b_p]) \leq \length([a_q,b_q])\leq 1, \quad \length([0,a_p])\leq 2, \cr 
\length([b_p,1])\leq 2, \quad \length([0,a_q])\leq 2,\quad\text{and}\quad \length([b_q,1])\leq 2\text.
\end{align*}}%
Let  
\begin{equation*}
\begin{aligned}
\uins L := L\cup \pupgad j(p,q)\,\text{ and }\, \uins\lambda:= \quos{\lambda_L\cup\lambda\up_{j p q}};
\end{aligned}
\end{equation*}
see \cite[Figure \figinsgad]{czginjlatcat} for $j=2$.
Then $\uins L= \tuple{\uins L;\uins\lambda}$, also denoted by $\upins L j p q$ or $\tuple{\upins L j p q; \lequins}$,
is  a lattice of length 5. Also,  both $L$ and $\pupgad j(p,q)$ are $\set{0,1}$-sublattices of~$\uins L$.
\end{lemma}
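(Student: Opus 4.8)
The plan is to view $\uins L$ as the gluing of the two lattices $L$ and $G:=\pupgad j(p,q)$ along their common part $M:=\set{0,a_p,b_p,a_q,b_q,1}=L\cap G$. First I would determine the order of $M$ from the hypotheses. Since $0<a_p\preceq b_p<1$, $0<a_q\preceq b_q<1$, $a_p\vee_L a_q=1$ and $b_p\wedge_L b_q=0$, a short computation shows that (when all six elements are distinct, i.e. the case $j=2$) $a_p\parallel a_q$, $b_p\parallel b_q$, $a_p\parallel b_q$ and $a_q\parallel b_p$, together with $a_p\wedge_L a_q\le b_p\wedge_L b_q=0$ and $b_p\vee_L b_q\ge a_p\vee_L a_q=1$; the cases $j=1$ and $j=0$ arise from the coincidences $a_p=b_p$ and/or $a_q=b_q$ forced by the corresponding length being $0$. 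In each case $M$ is a fixed lattice of at most six elements, and I would check, using $L\cap G=M$ and the figures, that $M$ is a $\set{0,1}$-sublattice of both $L$ and $G$ and that the two orders restrict to the \emph{same} order on $M$.

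Next I would give an explicit description of $\lequins=\uins\lambda$. Because every $\lequins$-chain is a concatenation of steps taken in $L$ or in $G$, and two consecutive steps of different types meet at an element of $L\cap G=M$, any such chain can be shortcut through $M$: for $x,y\in\uins L$ one has $x\lequins y$ iff $x\le_L y$, or $x\le_G y$, or there is an $m\in M$ with $x$ below $m$ and $m$ below $y$, each relation holding on the side containing the corresponding pair. The crucial consequence is \emph{conservativity}: the restriction of $\lequins$ to $L$ is $\le_L$ and its restriction to $G$ is $\le_G$; indeed, a detour through $G$ between two elements of $L$ enters and leaves $G$ at elements $m,m'\in M$, and $m\le_G m'$ already yields $m\le_L m'$ since the two orders agree on the sublattice $M$. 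Antisymmetry of $\lequins$ then reduces, via the same description, to antisymmetry inside $L$ or inside $G$, so $\tuple{\uins L;\lequins}$ is a poset with least element $0$ and greatest element $1$.

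The heart of the argument is showing that $\tuple{\uins L;\lequins}$ is a lattice. For two elements on the same side, conservativity gives that the join and meet computed in $L$ (respectively $G$) remain the join and meet in $\uins L$. For a crossing pair $x\in L\setminus M$ and $y\in G\setminus M$ I would reduce to $M$: let $\hat y\in M$ be the least element of $M$ lying $\le_G$-above $y$ and $\hat x\in M$ the least element of $M$ lying $\le_L$-above $x$, both existing since $M$ is a finite sublattice of each side. Using the order description and conservativity, every common upper bound of $x$ and $y$ lying in $L$ is $\ge_L x\vee_L\hat y$, and every one lying in $G$ is $\ge_G \hat x\vee_G y$; hence the existence of $x\veeuins y$ comes down to proving that these two candidate bounds are $\lequins$-comparable and that the smaller of them is itself a common upper bound of $x$ and $y$. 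Meets are handled dually. The join and meet hypotheses $a_p\vee_L a_q=1$ and $b_p\wedge_L b_q=0$ are exactly what forces the two candidates into a single chain. I expect this crossing computation to be the main obstacle, since it cannot be settled by the abstract gluing data alone: one must use the concrete geometry of $\pupgad j(p,q)$ — which extra elements lie above or below each of $a_p,b_p,a_q,b_q$ — read off from Figures~\ref{fig-uptwogadget}--\ref{fig-upnulgadget}. A convenient simplification is to first settle $j=2$ (comparing with \cite[Figure \figinsgad]{czginjlatcat}) and then obtain $j=1$ and $j=0$ from the quotient descriptions \eqref{refeqlowrankgad}, i.e. by collapsing $[a_p,b_p]$ and/or $[a_q,b_q]$.

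Finally, for the length I would use that both $L$ and $G$ have length $5$ and share $0$ and $1$. After the $M$-shortcutting, every maximal chain of $\uins L$ lies within $L$, within $G$, or splits at an $M$-element into an $L$-part and a $G$-part; the hypotheses $\length([0,a_p])\le 2$, $\length([b_p,1])\le 2$, $\length([0,a_q])\le 2$, $\length([b_q,1])\le 2$ and $\length([a_p,b_p])\le\length([a_q,b_q])\le 1$ bound each part so that no maximal chain exceeds length $5$, while a maximal chain running entirely through one side attains $5$. The $\set{0,1}$-sublattice claims are then immediate: $0,1\in M\subseteq L\cap G$, conservativity makes the two inclusions order embeddings, and the same-side case of the lattice computation shows that joins and meets of two elements of $L$ (respectively $G$) stay in $L$ (respectively $G$).
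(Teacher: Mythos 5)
Your proposal is correct and is essentially the paper's own route: the paper proves the lemma by citing \cite{czginjlatcat} for $j=2$ (declaring $j<2$ ``analogous but simpler'') and by recording precisely the gluing data you reconstruct --- the common $\set{0,1}$-sublattice $B=\set{0,a_p,b_p,a_q,b_q,1}$, the closure and interior operators onto $B$ (your $\hat x$, $\hat y$ are the paper's $\fhat x$, $\fcs y$), the shortcut description of $\uins\lambda$, and the same-side join/meet rules. The only added content in the paper's write-up is the closed-form answer to your ``two candidates are comparable'' question, namely that for $x\in L\setminus\pupgad j(p,q)$ and $y\in\pupgad j(p,q)\setminus L$ one always has $x\veeuins y=\fhat x\vee_{\gad}y$ (join computed inside the gadget) and $x\wedgeuins y=x\wedge_L\acs y$ (meet computed inside $L$), so the gadget-side candidate wins for joins and the $L$-side one for meets; verifying these formulas is exactly the gadget-geometry step you flag as the main obstacle, and the paper, like you, defers that verification to \cite{czginjlatcat}.
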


\begin{definition}\label{defClGadg}
Within $\uins L$, the (sublattice) $\pupgad j(p,q)$ is the \emph{upper gadget from $\pair{a_p}{b_p}$ to  $\pair{a_q}{b_q}$}. By duality, we can analogously glue the \emph{lower gadget} $\pdngad j(p,q)$ into $L$ \emph{from} $\pair{a_p}{b_p}$ \emph{to}  $\pair{a_q}{b_q}$. Applying Lemma~\ref{hinsgadlemma}, its dual, and \eqref{gadgetconV}, we can  glue the \emph{double gadget} $\dbgad j(p,q)$ into $L$ \emph{from} $\pair{a_p}{b_p}$ \emph{to}  $\pair{a_q}{b_q}$.
\end{definition}

\begin{proof}[Proof of Lemma~\ref{hinsgadlemma}] For $j=2$, the lemma coincides with \cite[Lemma~\insgadlemma]{czginjlatcat} while the case $j<2$ is analogous but simpler. Hence, it would suffice to say that the proof in \cite{czginjlatcat} works without any essential modification. However, since we will need some formulas from the proof later, we give some details. To simplify our equalities below,  we denote $\pupgad j(p,q)$
by $\pupgad j$ and, in subscript position,  by $\gad$.
As in \cite{czginjlatcat},  
we can still use the sublattice
\[B=B(p,q):=\set{0,a_p,b_p,a_q,b_q,1}=L\cap\pupgad j(p,q),
\]
the closure operators 
\begin{equation*}
\begin{aligned}
& \fcs{\phantom x}\colon \pupgad j\to B\text{, where } \fcs x\text{ is the smallest element of }B  \cap \ifilter {\gad} x, \cr
& \fhat{\phantom x}\colon L \to B\text{, where }  \fhat x\text{ is the smallest element of }B \cap \ifilter L x,
\end{aligned}
\end{equation*}
and, dually,  the interior operators
\begin{equation*}
\begin{aligned}
& \acs{\phantom x}\colon \pupgad j\to B\text{, where } \acs x\text{ is the largest element of }B \cap \iideal {\gad} x,\cr
& \ahat{\phantom x}\colon L \to B\text{, where } \ahat x\text{ is the largest element of }B \cap \iideal L x; 
\end{aligned}
\end{equation*}
which were introduced in  
 \cite[\cloops{} and \intops]{czginjlatcat}. 
Since our gadgets are "wide enough" in some geometric sense, the operators above are well-defined. As in \cite[\nRlDeF]{czginjlatcat}, 
\begin{equation*}
\begin{aligned}
&\uins\lambda \text{ is an ordering, }
\restrict { \uins\lambda}{L}=\lambda_L,\quad\restrict { \uins\lambda}{\gad}=\lambda\up_{p q},\cr
&\text{for }x\in L\text{ and }y\in \pupgad j,\,\,\,
x\lequins  y\iff \fhat x \leq_{\gad} y \iff x\leq_L \acs y, 
\cr
&\text{for }x\in \pupgad j\text{ and }y\in L,\,\,\,
x\lequins y\iff \fcs x \leq_L y \iff x\leq_{\gad} \ahat y\text. 
\end{aligned}
\end{equation*}
Denote the lattice operations in $L$ and $\pupgad j$ by $\vee_L$,  $\wedge_L$, and $\vee_{\gad}$, $\wedge_{\gad}$, respectively.  For $x,y\in \uins L$, we have that
\begin{align}
&\text{if }x\in L\setminus \pupgad j\text{ and }y\in \pupgad j\setminus L,\text{ then }x\wedgeuins y=  x  \wedge_L \acs y, 
\label{insgada} \\
&\text{if }x\in L\setminus \pupgad j\text{ and }y\in \pupgad j\setminus L,\text{ then }x\veeuins y= \fhat x \vee_{\gad} y, 
\label{insgadb} \\
&\text{if }x, y\in L ,\text{  then }x\wedgeuins y=  x  \wedge_L y\text{, and } x\veeuins y=  x  \vee_L y, 
\label{insgadc}\\
&\text{if }x, y\in \pupgad j,\text{  then }x\wedgeuins y=  x  \wedge_{\gad} y\text{, and } x\veeuins y=  x  \vee_{\gad} y\text.\label{insgadd}
\end{align}
These equations, which are  \cite[\insgada--\insgadd]{czginjlatcat} for $j=2$ and which are proved by exactly the same argument for $j<2$, show that $\uins L$ is a lattice. 
\end{proof}

\begin{figure}[htc]
\centerline
{\includegraphics[scale=1.0]{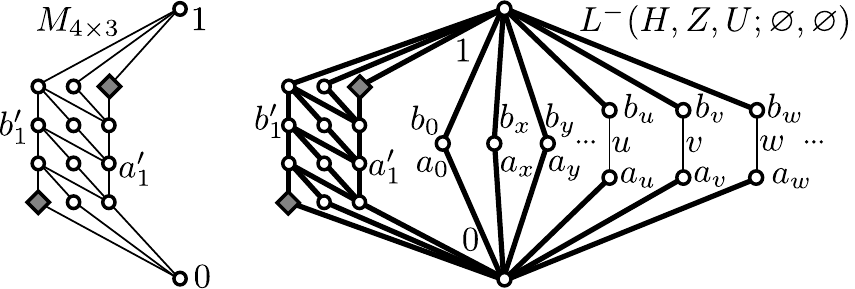}}
\caption{$\Mnh$  and $\Nla^-(H,Z,U; \emptyset,\emptyset)$, which is \emph{not} quasi-colored\label{fig-m33}}
\end{figure}

\subsection{Large lattices}
Let $H$ be a set and $Z,U\subset H$ such that 
\begin{equation}
0\in Z,\quad 1\in U,\quad\text{and}\quad Z\cap U=\emptyset\text.
\label{conventZH}
\end{equation} 
The selfdual simple lattice on the left of   Figure~\ref{fig-m33} is denoted by $\Mnh$; see also \cite[Figure \figmthreethree]{czginjlatcat} for another diagram. 
(The two square-shaped gray-filled elements will play a special role later.)
Also, we denote by 
\[\Nla^-(H,Z,U; \emptyset,\emptyset)=\tuple{\Nla^-(H,Z,U; \emptyset,\emptyset); \lambda_{\Nla^-(H,Z,U; \emptyset,\emptyset)} }
\]  
the lattice on the right, where $Z=\set{0, x,y\dots}$ and $H\setminus Z=\set{u,v,w,\dots}$. This lattice is almost the same as that on the right of \cite[Figure \figmthreethree]{czginjlatcat}. Note, however, that $|Z|$ and $|U|$ can be arbitrarily large cardinals. Note also that for $z\in Z$, $a_z=b_z$. The role of $\Mnh$ in the construction is two-fold. First, it is a simple lattice and it guarantees that all the thick edges are 1-colored, that is, they generate the largest congruence, even if $|H|=2$. Second, $\Mnh$ guarantees that $\Nla^-(H,Z,U; \emptyset,\emptyset)$ is of length 5. 
Since  $\pair{a_1}{b_1}$ is  1-colored according to labeling  but this edge does not generate the largest congruence, $\Nla^-(H,Z,U; \emptyset,\emptyset)$ is not a quasi-colored lattice (at least, not if $1$ is intended to be a largest elements in $H$). So  we cannot be satisfied yet. In order to make this edge and all the $\pair{a_r}{b_r}$, for $r\in U$, generate the largest congruence, Definition~\ref{defClGadg} allows us   
\begin{equation*}
\parbox{8.0cm}{to glue, for each $r\in U$, a distinct copy of $\dbgad 2(p,q)$ into $\Nla^-(H,Z,U; \emptyset,\emptyset)$ from $\pair{a_1'}{b_1'}$ 
to $\pair{a_r}{b_r}$.}
\end{equation*}
(No matter if we  glue the gadgets one by one by a transfinite induction or glue them simultaneously, we obtain the same.)
It follows from Lemma~\ref{hinsgadlemma} that we obtain a lattice in this way; we denote this lattice by 
\[\Nla(H,Z,U; \emptyset,\emptyset)=\tuple{\Nla(H,Z,U; \emptyset,\emptyset); \lambda_{H,Z,U; \emptyset,\emptyset} }\text.
\]
Note at this point that, after adding these gadgets to $\Nla^-(H,Z,U; \emptyset,\emptyset)$, 
\begin{equation}
\text{all edges of these gadgets become thick;}
\label{eqrgeathick}
\end{equation}
see convention ~\eqref{fiGrSgy}. Let 
\[\nu_{H,Z,U; \emptyset,\emptyset}=\quos{(Z\times H) \cup (H\times U)},
\]
and define $\gamma_{H,Z,U; \emptyset,\emptyset}$ by convention \eqref{fiGrSgy}. 
It is straightforward to see that 
\begin{equation}
\begin{aligned}
\alg\Nla(&H,Z,U; \emptyset,\emptyset)= \cr
&\tuple{\Nla(H,Z,U; \emptyset,\emptyset),
\lambda_{H,Z,U; \emptyset,\emptyset}; 
\gamma_{H,Z,U; \emptyset,\emptyset};
H, \nu_{H,Z,U; \emptyset,\emptyset} 
}
\end{aligned}
\end{equation}
is a quasi-colored lattice.

Next, to obtain larger lattices, we are going to insert  gadgets into the lattice $\Nla(H,Z,U; \emptyset,\emptyset)$ in a certain way. It will prompt follow Lemma~\ref{hinsgadlemma} that we obtain lattices; in particular, $\lambda_{H,Z,U;I,J}$ in \eqref{LAIJlat} will be a lattice order. Assume that
\begin{equation}
\parbox{8cm}{ $I$ and $J$ are subsets of $H\times H$ such that $p\neq q$ and ($q\in Z\then p\in Z$) hold for every $\pair p q\in I\cup J$.}
\label{IJassmpTion}
\end{equation}
With this assumption, we define the \emph{rank of a pair} $\pair p q\in I\cup J$ as follows:
\[ \rank{\pair p q}:=
\begin{cases}
0,&\text{if } p,q\in Z, \cr
1,&\text{if } p\in Z\text{ and }q\in H\setminus Z,\cr
2,&\text{if } p,q\in H\setminus Z\text.
\end{cases}
\]
Let us agree that, for every $\pair p q\in I\cup J$ and $j:=\rank{\pair p q}$, 
\begin{equation}
\begin{aligned}
 \pupgad j(p,q)\cap \Nla(H,Z,U;\emptyset,\emptyset)&=\set{0,a_p,b_p,a_q,b_q,1}\text { and}   \cr
\pdngad j(p,q)\cap \Nla(H,Z,U;\emptyset,\emptyset)&=\set{0,a_p,b_p,a_q,b_q,1}\text.
\end{aligned}
\label{NAconV}
\end{equation}
Taking  Conventions \eqref{gadgetconV} and \eqref{NAconV} into account, we define
\begin{equation}
\begin{aligned}
\Nla(H,Z,U;I, J) &:= \Nla(H,Z,U;\emptyset,\emptyset)\cup\bigcup_{\pair p q\in I}\pupgad{\rank{\pair p q}} (p,q) \cr
& \kern 2.0cm\cup \bigcup_{\pair p q\in J}\pdngad{\rank{\pair p q}} (p,q),\quad\text{and}\cr
\lambda_{H,Z,U;I,J} &:= 
\Quos{ \lambda_{H,Z,U;\emptyset,\emptyset}\cup\bigcup_{\pair p q\in I}\lambda\up_{p q} \cup \bigcup_{\pair p q\in J}\lambda\dn_{p q} } \text.\cr
\end{aligned}
\label{LAIJlat}
\end{equation} 
Based on Lemma~\ref{hinsgadlemma} and its dual, a trivial transfinite induction yields that 
\[\Nla(H,Z,U;I, J)=\tuple{\Nla(H,Z,U;I, J); \lambda_{H,Z,U;I,J}}
\]
is a lattice of length 5. Clearly, if $I=J$, then this lattice is selfdual. Let us emphasize that whenever we use the notation $\Nla(H,Z,U;I, J)$,  \eqref{IJassmpTion} is assumed.%
\begin{remark}\label{remstrongfaithfconstr}
For later reference, we note that for lattices of the form \eqref{LAIJlat},
$a_p$, $b_p$, $\cpq {ij}$, $\dpq{ij}$, $\dcpq{ij}$, etc.\ are understood as abbreviations for $\tuple{a,p}$, $\tuple{b,p}$, $\tuple{c,p,q,i,j}$, $\tuple{d,p,q,i,j}$, $\tuple{c^{\text{dual}},p,q,i,j}$, etc.. Therefore, 
\begin{align*}
\Nla(H_1,Z_1,U_1;I_1, J_1)= \Nla(H_2,Z_2,U_2;I_2, J_2)\text{ iff }\cr
\tuple{H_1,Z_1,U_1,I_1, J_1}= \tuple{H_2,Z_2,U_2,I_2, J_2}\text.
\end{align*}
\end{remark}

\subsection{Large quasi-colored lattices}
Assuming \eqref{conventZH}, let $\inter H:=H\setminus(Z\cup U)$. Also, let $\nu_{H,Z,U;\emptyset,\emptyset}=\quos{(Z\times H)\cup (H\times Z)}$. Note that each $z\in Z$ is a least element of $\tuple{H; \nu_{H,Z,U;\emptyset,\emptyset}}$ and each $u\in U$ is a largest element. Also,  for any two distinct $p,q\in \inter H$, $p$ and $q$ are incomparable, that is, none of $\pair p q$ and $\pair q p$ belongs to $\nu_{H,Z,U;\emptyset,\emptyset}$.
With convention \eqref{IJassmpTion}, let 
\begin{equation*}
\begin{aligned}
\nu_{H,Z,U;I,J}&:=\iquos H{\nu_{H,Z,U;\emptyset,\emptyset}\cup I\cup J }
\cr
&\kern 2.5 pt =\quos{(Z\times H)\cup (H\times Z)\cup I\cup J}
\text.
\end{aligned}
\end{equation*}
Based on \eqref{NAconV}, it is easy to see that  
\begin{equation}
\gamma_{H,Z,U;I,J} := 
 \gamma_{H,Z,U;\emptyset,\emptyset}\cup\bigcup_{\pair p q\in I}\gamma\up_{\rank{\pair p q} p q} \cup \bigcup_{\pair p q\in J}\gamma\dn_{\rank{\pair p q} p q}  
\label{eqrgammahzuIJ}
\end{equation}
is a well-defined map from $\pairs{\Nla(H,Z,U;I,J)}$ to $H$.

\begin{lemma}\label{keylemma}
Assume \eqref{IJassmpTion}. Then
\begin{equation}
\begin{aligned}
\alg\Nla(&H,Z,U;I,J)\cr
&:=   
\tuple{\Nla(H,Z,U;I,J),  \lambda_{H,Z,U;I,J};\gamma_{H,Z,U;I,J};  H, \nu_{H,Z,U;I,J} }
\end{aligned}
\label{nLfJbXY}
\end{equation} 
is a quasi-colored lattice of length 5. If $I=J$, then it is a selfdual lattice.
\end{lemma}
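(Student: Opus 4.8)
The plan is to verify the four defining items of a quasi-colored lattice for the structure \eqref{nLfJbXY} and then to settle selfduality. Three of the items are already at hand. That $\tuple{\Nla(H,Z,U;I,J);\lambda_{H,Z,U;I,J}}$ is a lattice of length $5$ was recorded right after \eqref{LAIJlat}, via Lemma~\ref{hinsgadlemma}, its dual, and a transfinite induction. That $\tuple{H;\nu_{H,Z,U;I,J}}$ is a quasiordered set is immediate, since $\nu_{H,Z,U;I,J}$ was defined as the quasiorder generated by $\nu_{H,Z,U;\emptyset,\emptyset}\cup I\cup J$. That $\gamma_{H,Z,U;I,J}$ of \eqref{eqrgammahzuIJ} is a well-defined map was already observed; its surjectivity onto $H$ comes from the construction of $\Nla^-(H,Z,U;\emptyset,\emptyset)$ taken from \cite{czginjlatcat}, and gluing gadgets only enlarges the domain of $\gamma_{H,Z,U;I,J}$.

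Hence the real content lies in (C\ref{labqa}) and (C\ref{labqb}). First I would record the congruence-theoretic reading of the coloring convention \eqref{fiGrSgy}: a thick edge generates the largest congruence (this is forced by the simple lattice $\Mnh$ and by \eqref{eqrgeathick}, even when $|H|=2$), a trivial pair generates the least congruence, transposed intervals generate equal congruences, and $\cg uv=\bigvee\cg xy$ holds over the covering steps $\pair xy$ of a maximal chain of $[u,v]$. Using this, together with the facts that for $z\in Z$ the pair $\pair{a_z}{b_z}$ is trivial (so that $Z$-colors lie at the bottom of the principal-congruence order) and that the $U$-gadgets were glued precisely to make $\pair{a_u}{b_u}$ thick for $u\in U$ (so that $U$-colors lie at the top), both (C\ref{labqa}) and (C\ref{labqb}) reduce to the single equivalence $\cg{a_p}{b_p}\leq\cg{a_q}{b_q}\iff p\leqnu q$, for colors $p,q\in H$.

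For the implication $p\leqnu q\Rightarrow\cg{a_p}{b_p}\leq\cg{a_q}{b_q}$, I would use that $\nu_{H,Z,U;I,J}$ is generated by $\nu_{H,Z,U;\emptyset,\emptyset}\cup I\cup J$, write a $\nu$-chain from $p$ to $q$ as a concatenation of single generating steps, verify the corresponding containment of principal congruences one step at a time, and close by transitivity. A step coming from $\nu_{H,Z,U;\emptyset,\emptyset}$ is settled by the extremal facts just recorded; a step $\pair pq\in I$ is exactly the effect of the upper gadget $\pupgad{\rank{\pair pq}}(p,q)$ glued from $\pair{a_p}{b_p}$ to $\pair{a_q}{b_q}$, which forces $\cg{a_p}{b_p}\leq\cg{a_q}{b_q}$ (compare the way the $U$-gadgets force $\pair{a_u}{b_u}$ to generate the top); a step $\pair pq\in J$ is handled dually, through the lower gadget $\pdngad{\rank{\pair pq}}(p,q)$.

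The converse, $\cg{a_p}{b_p}\leq\cg{a_q}{b_q}\Rightarrow p\leqnu q$, is where I expect the real work, and this is the main obstacle. One must show that $\cg{a_q}{b_q}$ collapses a $p$-colored prime interval \emph{only} when $p\leqnu q$, i.e. that the glued gadgets create no unintended collapses. I would obtain this from an explicit description of $\cg{a_q}{b_q}$: since the gadgets are ``wide enough,'' the join and meet formulas \eqref{insgada}--\eqref{insgadd} of Lemma~\ref{hinsgadlemma} confine how a collapse propagates across the glued pieces, so that the prime intervals collapsed by $\cg{a_q}{b_q}$ are governed exactly by the generators of $\nu_{H,Z,U;I,J}$. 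This computation is carried out for the rank-$2$ building blocks in \cite{czginjlatcat}; the rank-$1$ and rank-$0$ gadgets are analogous and simpler, and the only genuinely new feature here, that $|Z|$ and $|U|$ may be arbitrary cardinals, is harmless, because congruence generation and the Mal'cev-type witnesses it uses involve only finitely many elements and hence only finitely many gadgets at a time. Finally, selfduality for $I=J$ needs no new argument: $\Nla(H,Z,U;I,J)$ was already seen to be selfdual when $I=J$ right after \eqref{LAIJlat}, since $\Mnh$ and the $U$-gadgets (the double gadgets $\dbgad 2$) are selfdual and, by Definition~\ref{defClGadg}, the lower gadgets are the duals of the upper ones, so the order dual of $\Nla(H,Z,U;I,J)$ is $\Nla(H,Z,U;J,I)$.
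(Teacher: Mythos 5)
Your overall architecture matches the paper's: (C\ref{labqa}) by stepping along the generators of $\nu_{H,Z,U;I,J}$, the reduction of the hard direction to the rank-$2$ computation of \cite{czginjlatcat}, the observation that arbitrarily large $|Z|$ and $|U|$ are harmless because congruence witnesses are finite, and the selfduality argument are all in the spirit of the actual proof. However, at (C\ref{labqb}) --- which is exactly the part the paper singles out as nontrivial --- your proposal has a genuine gap: the sentence ``the join and meet formulas \eqref{insgada}--\eqref{insgadd} confine how a collapse propagates across the glued pieces, so that the prime intervals collapsed by $\cg{a_q}{b_q}$ are governed exactly by the generators of $\nu$'' is a restatement of what must be proved, not an argument. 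Those formulas only describe the lattice operations on the glued union; a containment $\pair{a_p}{b_p}\in\cg{a_q}{b_q}$ is witnessed by prime-projectivity (Mal'cev-type) chains that may wander through every gadget, and nothing in \eqref{insgada}--\eqref{insgadd} by itself prevents such a chain from creating an ``unintended'' collapse.

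The paper supplies two concrete mechanisms that your outline is missing. First, to control the $1$-colored pairs one needs a \emph{proper} congruence collapsing all thin edges: the paper constructs the equivalence $\balpha$ whose nontrivial blocks are the intervals $[a_p,b_p]$, $p\in\inter H$, and the thin-edge blocks of the gadgets, and verifies via Gr\"atzer's Technical Lemma \cite{ggtechnicallemma} that $\balpha$ is a congruence distinct from $\nablaell{\Nla(H,Z,U;I,J)}$; this gives $\gamma(\pair xy)=1\iff\cg xy=\nablaell{\Nla(H,Z,U;I,J)}$. Your appeal to the simplicity of $\Mnh$ and to \eqref{eqrgeathick} only yields the easy direction (thick implies top); the direction you actually need in (C\ref{labqb}) --- thin edges do \emph{not} generate the top --- requires exhibiting such a separating congruence. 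Second, for the statement \eqref{dwhTppwhjS} itself the paper invokes Gr\"atzer's Prime-projectivity Lemma \cite{ggprimeprojective}: a \emph{shortest} witnessing sequence of prime intervals consists of thin edges only, hence it avoids the rank-$0$ gadgets (no thin edges), the gadgets of \eqref{eqrgeathick}, and the elements $a_z=b_z$ with $z\in Z$, and it cannot enter a rank-$1$ gadget (too few thin edges there, so the sequence could only make a loop, contradicting minimality). Only after this reduction does the sequence live in the sublattice treated in \cite{czginjlatcat}, where the rank-$2$ computation applies. Your plan asserts the low-rank cases are ``analogous and simpler,'' but the paper's point is different and sharper: they are not re-done at all; they are shown to be \emph{irrelevant} to a shortest witnessing chain. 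Without the congruence $\balpha$ and the shortest-chain argument (or substitutes for them), your proof of (C\ref{labqb}) does not go through.
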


\begin{proof} 
If $Z=\set 0$ and $\rank{\pair p q}=2$ for all $\pair p q\in I\cup J$, then the statement is practically the same as \cite[Lemma~\keylemma]{czginjlatcat}. (Although $1\notin U=\emptyset$ in \cite[Lemma~\keylemma]{czginjlatcat}, this does not make any difference.) 
The nontrivial part is to show 
(C\ref{labqb}). This argument in \cite{czginjlatcat} has two ingredients, and these ingredients also work in the present situation.

First, let  $\balpha$ be the equivalence on $\Nla(H,Z,U;I,J)$ whose non-singleton equivalence classes are the $[a_p,b_p]$ for $p\in \inter H$, 
the $[\cpq i,\dpq i]$ for $\pair p q\in I$ and $i\in\set{1,\dots,5}$, and the $[\ddpq i,\dcpq i]$ for $\pair p q\in J$ and $i\in\set{1,\dots,5}$. Using the Technical Lemma from \init{G.~}Gr\"atzer~\cite{ggtechnicallemma}, cited in \cite[Lemma~\technicallemma]{czginjlatcat},  it is straightforward to see that $\balpha$ is a congruence. Clearly, $\balpha$ is distinct from $\nablaell{\Nla(H,Z,U;I,J)}$, the largest congruence of $\Nla(H,Z,U;I,J)$. Like in \cite[\nablIfffcl]{czginjlatcat}, this implies easily that, for any $\pair x y\in\pairs{\Nla(H,Z,U;I,J)}$, 
\begin{equation*}
\gamma_{H,Z,U;I,J}(\pair x y) = 1\iff \cg x y = \nablaell{\Nla(H,Z,U;I,J)}\text.
\end{equation*} 

The second ingredient is to show that 
\begin{equation}
\parbox{10 cm}{if $p,q\in\inter H$, $\cg{a_p}{b_p} \leq   \cg{a_q}{b_q}\neq\nablaell{\Nla(H,Z,U; I,J)}$, and
$p\neq q$,  then 
$\pair p q=\pair{\gamma_{H,Z,U;I,J}(\pair{a_p}{b_p})}{  \gamma_{H,Z,U;I,J}(\pair{a_q}{b_q})}\in \nu_{H,Z,U;I,J}$;}
\label{dwhTppwhjS}
\end{equation}
compare this with \cite[\dwhTppwhjS]{czginjlatcat}. 
The inequality $\cg{a_p}{b_p} \leq   \cg{a_q}{b_q}$ is equivalent to the containment $\pair{a_p}{b_p} \in   \cg{a_q}{b_q}$. This containment is witnessed by a \emph{shortest} sequence of consecutive prime intervals in the sense of the Prime-projectivity Lemma of \init{G.~}Gr\"atzer~\cite{ggprimeprojective}; note that this lemma is cited in \cite[Lemma~\primprojlemma]{czginjlatcat}. If one of the prime intervals in the sequence generates $\nablaell{\Nla(H,Z,U; I,J)}$, then the easy direction of the Prime-projectivity Lemma yields that  $\cg{a_q}{b_q}=\nablaell{\Nla(H,Z,U; I,J)}$, a contradiction. Hence, none of these prime intervals generates $\nablaell{\Nla(H,Z,U; I,J)}$. Thus, since (C\ref{labqa}) is easily verified in the same way as in \cite{czginjlatcat}, none of these prime intervals is $1$-colored. In other words, all prime intervals of the sequence are thin edges. Gadgets of rank 0 contain no thin edges, so the sequence avoids them. The same holds for the gadgets mentioned in \eqref{eqrgeathick}. 
Gadgets of rank 1 contain too few thin edges, so the sequence can only make a loop in them; this is impossible since we consider the shortest sequence. Thus, the sequence goes in the sublattice that we obtain by omitting all gadgets of rank less than 2, all gadgets occurring in \eqref{eqrgeathick}, and all elements $a_z=b_z$ for $z\in Z$. So we can work in this sublattice, which is the same as the lattice considered in \cite[\dwhTppwhjS]{czginjlatcat}. Consequently, the proof  of  \cite[\dwhTppwhjS]{czginjlatcat} yields \eqref{dwhTppwhjS}. 
\end{proof}

\section{From quasiorders to homomorphisms}\label{secfromquasitohomo}
For a quasiordered set $\tuple{H;\nu}$, we define
\begin{equation}
\begin{aligned}
Z(H)&:=\set{x\in H: (\forall y\in H)\,\,(\pair x y\in\nu)}\text{ and}\cr
U(H)&:=\set{x\in H: (\forall y\in H)\,\,(\pair y x\in\nu)}\text.
\end{aligned}
\label{eqrZHUHdf}
\end{equation}
These are the set of \emph{smallest elements} (the notation comes from ``zeros'') and that of \emph{largest elements} (``units''). In this section, we are only interested in the following particular case of the quasi-colored lattices $\alg\Nla(H,Z,U;I,J)$.

\begin{definition}\label{defLofquord}
For a quasiordered set $H=\tuple{H;\nu}$, assume that
\begin{equation}
0\in Z(H),\quad 1\in U(H),\quad\text{and}\quad 
0\neq 1\text{.}
\label{eqrHnuassum}
\end{equation}
With this assumption,  we define 
\begin{equation}
\begin{aligned}
\alg\Nla(H,\nu)=  \tuple{\Nla(H,\nu),  \lambda_{H,\nu};\gamma_{H,\nu};  H, \nu}\,\text{ as }\,\alg\Nla(H,Z(H), U(H);\nu,\nu)
\end{aligned}
\label{eqrLHnudf}
\end{equation}
according to \eqref{nLfJbXY}. Note that $\nu=\nu_{H,Z(H),U(H);\nu,\nu}$ and, clearly, $\Nla(H,\nu)$ is a selfdual lattice of length 5.
\end{definition}

For quasiordered sets $\tuple{H_1;\nu_1}$ and $\tuple{H_2;\nu_2}$, a map $f\colon H_1\to H_2$ is \emph{monotone} if $\pair xy\in \nu_1$ implies $\pair{f(x)}{f(y)}\in \nu_2$ for all $x,y\in H_1$. Now, we are in the position to state the main lemma of this subsection.

\begin{lemma}\label{lemmahomomain}
Let $\tuple{H_1;\nu_1}$ and $\tuple{H_2;\nu_2}$ be quasiordered sets, both with $0$ and $1$ such that $0\neq 1$. 
If $f\colon H_1\to H_2$ is an \emph{injective} monotone map such that $f(Z(H_1))\subseteq Z(H_2)$ and $f(U(H_1))\subseteq U(H_2)$, then there exists a \emph{unique} $\set{0,1}$-preserving lattice homomorphism $g\colon \Nla(H_1,\nu_1) \to \Nla(H_2,\nu_2)$ such that 
\begin{equation}
g(a_p)=a_{f(p)}\,\text{ and }\, g(b_p)=b_{f(p)},\,\text{ for all }\,p\in H_1, 
\label{eqrefgprops}
\end{equation}
and the $g$-image of each of the two square-shaped gray-filled elements, see Figure~\ref{fig-m33}, is a square-shaped gray-filled element.   
\end{lemma}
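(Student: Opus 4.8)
The plan is to define $g$ explicitly on the two main building blocks of $\Nla(H_1,\nu_1)$ and then verify it extends coherently. First I would observe that the lattice $\Nla(H,\nu)=\Nla(H,Z(H),U(H);\nu,\nu)$ is assembled (by \eqref{LAIJlat} and Definition~\ref{defLofquord}) from the base lattice $\Nla(H,Z(H),U(H);\emptyset,\emptyset)$ together with copies of the single gadgets $\pupgad{\rank{\pair p q}}(p,q)$ and $\pdngad{\rank{\pair p q}}(p,q)$ glued in along $\pair{a_p}{b_p}$ and $\pair{a_q}{b_q}$ for each $\pair p q\in\nu$. Accordingly, I would define $g$ in two stages: on the base lattice first, then on each gadget. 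On the base, the map is forced on the elements $a_p,b_p$ by the requirement \eqref{eqrefgprops}, on the $z\in Z(H_1)$ by $a_z=b_z\mapsto a_{f(z)}=b_{f(z)}$, and on the $\Mnh$-part (including the gray-filled elements) by sending each structural element of the $M_{4\times3}$ copy in the source to the corresponding structural element of the $M_{4\times3}$ copy in the target; the two extra $\dbgad 2$-gadgets glued above $\pair{a_1'}{b_1'}$ toward $\pair{a_r}{b_r}$ for $r\in U$ are sent to the analogous gadgets in the target, which is where $f(U(H_1))\subseteq U(H_2)$ is used.

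Next I would handle the inserted gadgets. For each $\pair p q\in\nu_1$, monotonicity of $f$ gives $\pair{f(p)}{f(q)}\in\nu_2$, and the hypotheses $f(Z(H_1))\subseteq Z(H_2)$, $f(U(H_1))\subseteq U(H_2)$, together with the rank definition, guarantee that $\rank{\pair p q}=\rank{\pair{f(p)}{f(q)}}$; here injectivity of $f$ ensures $f(p)\neq f(q)$, so $\pair{f(p)}{f(q)}$ is a legitimate pair and \eqref{IJassmpTion} holds for it. Thus there is an isomorphic copy of the same gadget $\pqmgad{\rank{\pair p q}}(p,q)$ sitting in the target over $\pair{a_{f(p)}}{b_{f(p)}}$ and $\pair{a_{f(q)}}{b_{f(q)}}$, and I would define $g$ on this gadget by the obvious label-preserving bijection onto its image copy (the $\cpq{ij},\dpq{ij},\epq$ etc.\ map to $c^{f(p)f(q)}_{ij},d^{f(p)f(q)}_{ij},e^{f(p)f(q)}$, and dually for the downward gadget). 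By Remark~\ref{remstrongfaithfconstr} these named elements are genuinely distinct indexed tuples, so injectivity of $f$ makes the assignment on gadget-interiors injective and well-defined, with no accidental collisions between gadgets attached to different pairs.

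The main obstacle is proving that this piecewise definition yields a \emph{lattice homomorphism}, not merely an order-preserving map. The delicate point is that joins and meets of elements lying in different gadgets (or one in a gadget and one in the base) are computed by the glueing formulas \eqref{insgada}--\eqref{insgadd} via the closure operators $\fcs{\phantom x},\fhat{\phantom x}$ and interior operators $\acs{\phantom x},\ahat{\phantom x}$ of Lemma~\ref{hinsgadlemma}. I would verify that $g$ commutes with these operators: since $g$ maps the common boundary $B(p,q)=\set{0,a_p,b_p,a_q,b_q,1}$ onto $B(f(p),f(q))$ in an order-isomorphic way and sends each gadget isomorphically onto the corresponding target gadget, we get $\fhat{g(x)}=g(\fhat x)$ and $\acs{g(y)}=g(\acs y)$, and likewise for the interior operators. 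Feeding this into \eqref{insgada}--\eqref{insgadd} shows $g(x\veeuins y)=g(x)\veeuins g(y)$ and dually for meets, so $g$ preserves both operations; that it preserves $0$ and $1$ is immediate. Finally, uniqueness follows because \eqref{eqrefgprops} and the gray-element condition pin down $g$ on a generating set: every element of $\Nla(H_1,\nu_1)$ is a lattice polynomial in the $a_p,b_p$ and the finitely many $\Mnh$-elements (the gadget interiors being joins and meets of boundary elements with gadget-specific structural elements that are themselves forced once their neighbours are fixed), so any $\set{0,1}$-homomorphism agreeing with $g$ on these generators equals $g$.
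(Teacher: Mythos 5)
Your proposal founders on one decisive point: the claim that $\rank{\pair p q}=\rank{\pair{f(p)}{f(q)}}$. The hypotheses give only the inclusion $f(Z(H_1))\subseteq Z(H_2)$, not its converse: an element $p\notin Z(H_1)$ may perfectly well satisfy $f(p)\in Z(H_2)$, and injectivity of $f$ does not rule this out (it only guarantees $f(p)\neq f(q)$). Hence all one can deduce is the inequality $j'\leq j$, which is exactly what the paper records in \eqref{eqrjleqjprime}. This is not a marginal degeneracy: in the intended application (the proof of Theorem~\ref{thmlat}), the quasiorders $\nu_X$ are built from monotone maps $\posetfunct(f)$ that need not be injective, so a triplet outside $Z$ routinely lands in $Z$ and the rank genuinely drops; the paper's own Figures~\ref{fighomopqr}, \ref{fighomopqrsupup}, and \ref{fighomogrc} all depict such configurations, e.g.\ $\tuple{j,j',k,k'}=\tuple{2,0,2,1}$.

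Once the rank can drop, the restriction of $g$ to a gadget is forced by \eqref{eqrefgprops} to be the \emph{surjective quotient} homomorphism $\pupgad j(p,q)\to\pupgad{j'}(p',q')$ coming from \eqref{refeqlowrankgad}, not an isomorphism, and your verification collapses: a piecewise map whose pieces have nontrivial kernels does not commute with the closure and interior operators of Lemma~\ref{hinsgadlemma} the way an isomorphism does, so \eqref{insgada}--\eqref{insgadd} no longer transport joins and meets for free. This is precisely where the real work of the paper lies: for $x,y$ in two different gadgets one works in the sublattice $S=\pqmgad j(p,q)\cup\pqmgad k(r,s)$, and one must prove that the union $\balpha_1\cup\balpha_2$ of the two kernels is a \emph{congruence} of $S$, so that $S'$ is the corresponding quotient and $\restrict gS$ the natural projection; the paper does this by a case analysis using the quasi-coloring, condition (C\ref{labqb}), and distributivity of congruence lattices. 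None of this appears in your argument. A smaller remark: your uniqueness argument via ``every element is a lattice polynomial in the $a_p,b_p$ and the $\Mnh$-elements'' is also doubtful, since the gadget interiors are not obviously generated by the boundary pairs; the paper instead derives uniqueness from congruence rigidity, namely that $\ker(g)$ cannot collapse a thick edge while each gadget admits so few congruences that $g$ is determined on it.
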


With reference to \eqref{eqrZHUHdf}, note that  $0\in Z(H_i)$, $1\in U(H_i)$, and $Z(H_i)\cap U(H_i)=\emptyset$ hold for $i\in\set{1,2}$.
The assumption of injectivity cannot be omitted from this lemma, because if $f$ is not injective and a $\set{0,1}$-preserving homomorphism  $g$ satisfies \eqref{eqrefgprops}, then the kernel of $g$ collapses some $a_p\neq a_q$, so this kernel is the largest congruence, contradicting $g(0)=0\neq 1=g(1)$.

\begin{proof}[Proof of Lemma~\ref{lemmahomomain}]
First, we deal with the uniqueness of $g$. Since $g(0)=0\neq 1=g(1)$, the kernel congruence $\ker(g)$ of $g$ cannot collapse a thick (that is, 1-colored) edge. Since all edges of $\Mnh$ are thick, the restriction $\restrict g{\Mnh}$ of $g$ to $\Mnh$ is injective. Since no other sublattice of $L_2$ than $\Mnh$ itself is isomorphic to $\Mnh$, it follows that $g(\Mnh)$ is the unique $\Mnh$ sublattice of $\Nla(H_2;\nu_2)$. Observe that except for the two doubly irreducible atoms and the two doubly irreducible coatoms, each element of $\Mnh$ is a fixed point of all automorphisms of $\Mnh$. Therefore, since $g$ preserves the ``square-shaped gray-filled'' property, we conclude that $\restrict g{\Mnh}$ is uniquely determined. The $g$-images of the $a_p$ and $b_p$, $p\in H_1$, are determined by the assumption on $g$.  Observe that an upper gadget $\pupgad 2(p,q)$ has exactly two non-trivial congruences, $\cg{a_p}{b_p}$ and $\cg{a_q}{b_q}$; $\pupgad 1(p,q)$ has only $\cg{a_q}{b_q}$, and $\pupgad 0(p,q)$  has none. The same holds for lower gadgets. Therefore, since $\ker(g)$ cannot collapse a thick edge, it follows easily that the restriction of $g$ to any gadget is uniquely determined. Therefore, $g$ is unique.

In the rest of the proof, we intend to show the existence of $g$. We will define an appropriate $g$ as the union of some partial maps. Let $g_{\Mnh}$ denote the unique isomorphism from the $\Mnh$ sublattice of $\Nla(H_1,\nu_1)$  onto the $\Mnh$ sublattice of  $\Nla(H_2,\nu_2)$ such that $g_{\Mnh}$ preserves the ``square-shaped gray-filled'' property. 
For $i\in\set{1,2}$, we denote $\nu_i\setminus\set{\pair x x: x\in H_i}$ by $\nu_i^+$.
Next, let $\pair p q\in\nu_1^+$ and  $j:=\rank{\pair p q}$. 
By the construction of $\Nla(H_1,\nu_1)$, see \eqref{eqrLHnudf} and Definition~\ref{defClGadg}, the gadget $\pupgad j (p,q)$ is a $\set{0,1}$-sublattice of $\Nla(H_1,\nu_1)$ from $\pair{a_p}{b_p}$ to $\pair{a_q}{b_q}$. Let $p'=f(p)$, $q'=f(q)$, and $j'=\rank{\pair {p'} {q'}}$. Besides that $f$ is monotone, we frequently need the assumption that it is injective; at present, we conclude $\pair{p'}{q'}\in\nu_2^+$ from these assumptions. (Later, we will not always emphasize similar consequences of these assumptions.) 
It follows from  $\pair{p'}{q'}\in\nu_2^+$ and the construction of $\Nla(H_2,\nu_2)$ that  $\pupgad {j'} (p',q')$ is a gadget in $\Nla(H_2,\nu_2)$ from $\pair{a_{p'}}{b_{p'}}$ to $\pair{a_{q'}}{b_{q'}}$.
We obtain  from $f(Z(H_1))\subseteq Z(H_2)$ that
\begin{equation}
j'\leq j\text. 
\label{eqrjleqjprime}
\end{equation}
According to \eqref{refeqlowrankgad}, we can take the unique surjective $\set{0,1}$-preserving lattice homomorphism $\pupmap p q\colon  \pupgad j (p,q)\to  \pupgad {j'} (p',q')$ such that $\pupmap p q(a_p)=a_{p'}$,  $\pupmap p q(b_p)=b_{p'}$,  $\pupmap p q(a_q)=a_{q'}$, and  $\pupmap p q(b_q)=b_{q'}$. We take the $\set{0,1}$-preserving lattice homomorphism  $\pdnmap p q\colon  \pdngad j (p,q)\to  \pdngad {j'} (p',q')$ analogously. 
Note that $g_{\Mnh}$ maps $a_1'\in \Nla(H_1,\nu_1)$ onto  $a_1'\in \Nla(H_2,\nu_2)$, and the same is true for $b_1'$. For $u\in U(H_1)$, we know that $f(u)\in U(H_2)$. By construction, there is an upper gadget of rank 2 from $\pair{a_1'}{b_1'}$ to $\pair{a_u}{b_u}$ in $\Nla(H_1,\nu_1)$, and we have an  upper gadget of rank 2 from $\pair{a_1'}{b_1'}$ to $\pair{a_{f(u)}}{b_{f(u)}}$ in  $\Nla(H_2,\nu_2)$. The unique isomorphism from the first gadget to the second such that $a_1'\mapsto a_1'$, $b_1'\mapsto b_1'$, $a_u\mapsto a_{f(u)}$, and $b_u\mapsto b_{f(u)}$ is denoted by $\pupmap {1'}u$. 
We define the isomorphism $\pdnmap {1'}u$ between the corresponding lower gadgets similarly.
For $\pair{p_1}{q_1}, \pair{p_2}{q_2} \in \nu_1^+$ and $u\in U(H_1)$,  any two of the homomorphisms $g_{\Mnh}$, $\pupmap{p_1}{q_1}$, $\pdnmap{p_1}{q_1}$, $\pupmap{p_2}{q_2}$, $\pdnmap{p_2}{q_2}$, $\pupmap {1'}u$, and $\pdnmap {1'}u$ agree on the intersection of their domains. Therefore,
\begin{equation*} 
g:= g_{\Mnh} \cup \bigcup_{\pair p q \in \nu_1^+} \minusskip \pupmap p q \plusskip \cup 
\bigcup_{\pair p q \in \nu_1^+}\minusskip   \pdnmap p q\plusskip 
\cup \bigcup_{u\in U(H_1)}\minusskip    \pupmap {1'}u\plusskip 
\cup \bigcup_{u\in U(H_1)}\minusskip    \pdnmap {1'}u
\end{equation*}
is a well-defined $\set{0,1}$-preserving map from $\Nla(H_1,\nu_1)$ to $\Nla(H_2,\nu_2)$.

\begin{figure}[htc]
\centerline
{\includegraphics[scale=1.0]{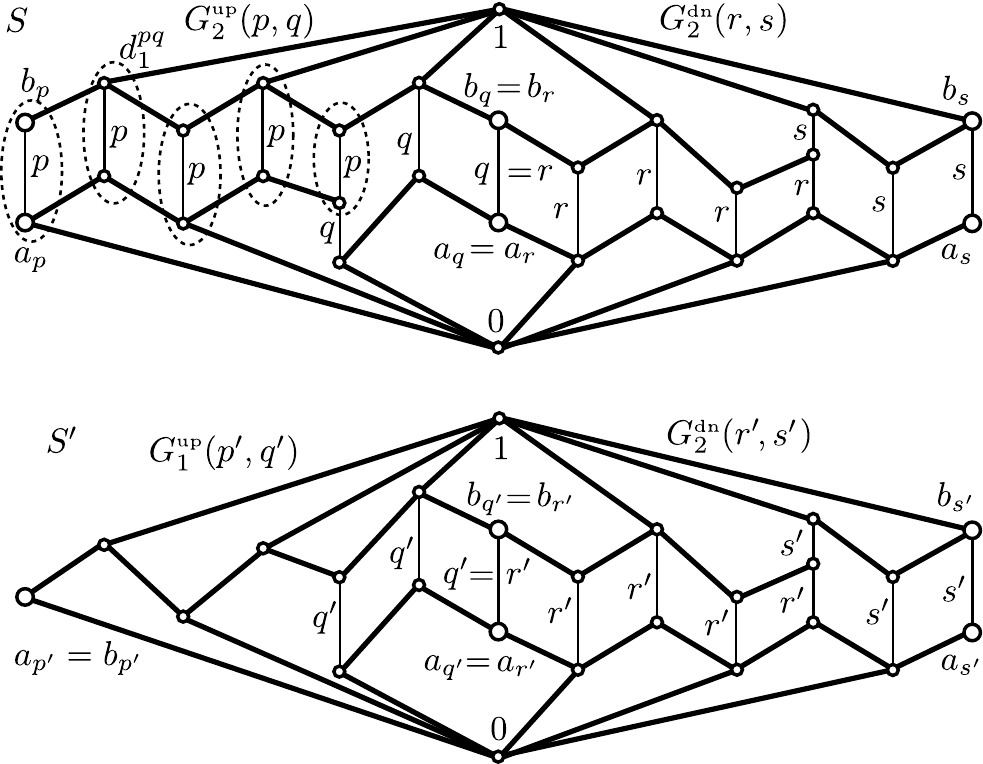}}
\caption{$\tuple{\text{up}, \text{dn}}$, $q=r$, and $\tuple{j,j',k,k'}=\tuple{2,1,2,2}$ \label{fighomopqr}} 
\end{figure}

Next, we are going to show that, for all $x,y\in\Nla(H_1;\nu_1)$,
\begin{equation}
g(x\vee y)=g(x)\vee g(y)\,
\text{ and }\,g(x\wedge y)=g(x)\wedge g(y)\text. 
\label{eqrgJoinhomo}
\end{equation}
Clearly, we can assume that $\set{x,y}\cap \Mnh=\emptyset$ and none of the single gadgets contains both $x$ and $y$. Therefore,  $\set{0,1}\cap\set{x,y}=\emptyset$ and there are single gadgets $\pqmgad j (p, q)$ and $\pqmgad k(r, s)$ containing $x$ and $y$, respectively. 
Of course, $p\neq q$ and $r\neq s$; however, we do not know more than $|\set{p,q,r,s}|\in \set{2,3,4}$.
We can work in the union $S:=\pqmgad j (p, q)\cup \pqmgad k(r,s)$, which is a sublattice by \eqref{insgada}--\eqref{insgadd}; see also the upper parts of Figures~\ref{fighomopqr},  \ref{fighomopqrsupup}, and \ref{fighomogrc}. Let $p':=f(p)$, $q':=f(q)$, $r':=r(p)$,  $s':=f(s)$, and $S':=\pqmgad {j'} (p', q')\cup \pqmgad {k'}(r',s')$; see the lower parts of Figures~\ref{fighomopqr},  \ref{fighomopqrsupup}, and \ref{fighomogrc}, where $g(x)$ is geometrically below $x$ for every $x\in S$. Again, $S'$  is a sublattice by \eqref{insgada}--\eqref{insgadd}.  
(Note that if $\pair p q$ or $\pair r s$  is of the form $\pair {1'}u$ with $u\in U(H_1)$, then we have to extend $f$ by $1'\mapsto 1'$, since $1'\notin H_1$.) 
Let $j:=\rank{\pair p q}$, $k:=\rank{\pair r s}$, $j':=\rank{\pair {p'} {q'}}$, and $k':=\rank{\pair {r'} {s'}}$. 

We know from \eqref{eqrjleqjprime} that $j'\leq j$ and, similarly, $k'\leq k$. Hence, by the definition of our gadgets of rank less than 2, there 
are congruences $\balpha_1$ and $\balpha_2$ of $\pqmgad j (p, q)$ and $\pqmgad k(r,s)$ and surjective 
homomorphisms (namely, the natural projections) $g_1\colon \pqmgad j (p, q) \to \pqmgad {j'} (p', q')$ and $g_2\colon \pqmgad k(r,s) \to  \pqmgad {k'}(r',s')$
such that $\balpha_1$ is the kernel of $g_1$ and $\balpha_2$ is the kernel of $g_2$. In Figures~\ref{fighomopqr}, \ref{fighomopqrsupup}, and \ref{fighomogrc}, the nontrivial 
congruence blocks are indicated by dotted lines. 

By the definition of $g$,  $g_1\cup g_2$ is the restriction $\restrict gS$ of $g$ to $S$. Thus, to verify \eqref{eqrgJoinhomo}, we have to show that $g_1\cup g_2\colon S\to S'$ is a homomorphism. It suffices to show that $\balpha_1\cup \balpha_2$ is a congruence of $S$, because then $S'$ is the quotient lattice of $S$ modulo $\balpha_1\cup \balpha_2$ and $g_1\cup g_2$ is the natural projection homomorphism of $S$ to this quotient lattice. There are several cases but all of them can be settled similarly. We only discuss those given by Figures~\ref{fighomopqr}, \ref{fighomopqrsupup}, and \ref{fighomogrc}. By \init{G.~}Gr\"atzer~\cite{ggtechnicallemma}, 
each of these cases would be quite easy, although a bit tedious. However, to indicate that the rest of cases are similar,  we give slightly more sophisticated arguments for them.
Note that these figures also use the injectivity of $f$; for example, this is why $p'\neq s'$ and $q'\neq r'$ in Figure~\ref{fighomopqrsupup}.

\begin{figure}[htc]
\centerline
{\includegraphics[scale=1.0]{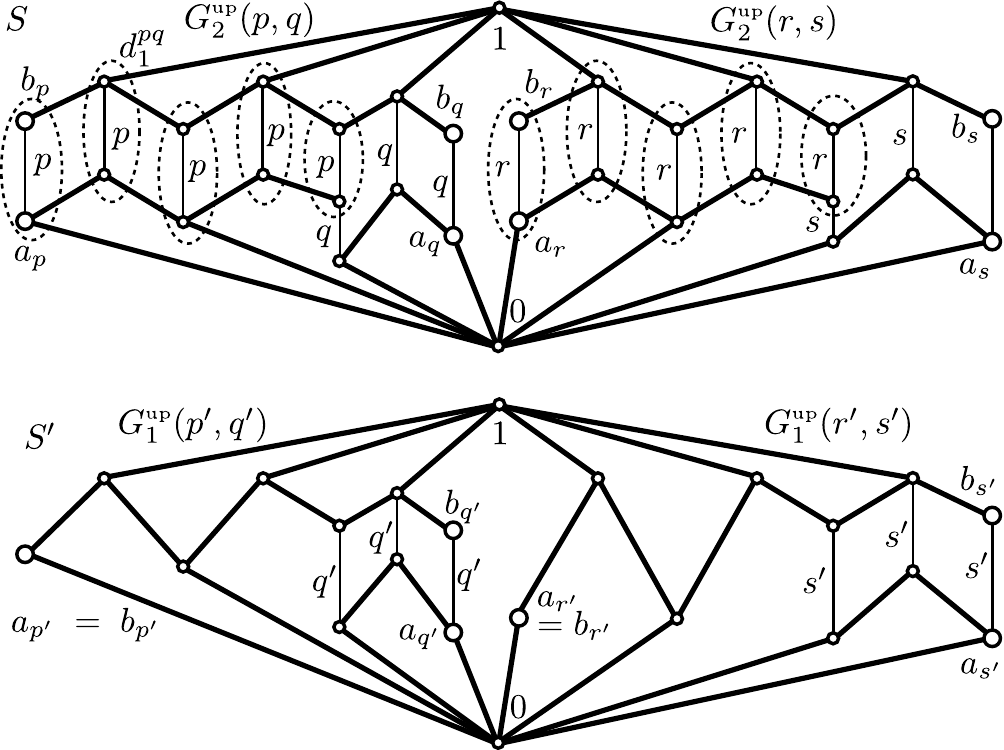}}
\caption{$\tuple{\text{up}, \text{up}}$, $\set{p,q,r,s}|=4$, and $\tuple{j,j',k,k'}=\tuple{2,1,2,1}$ \label{fighomopqrsupup}} 
\end{figure}

In case of Figure~\ref{fighomopqr}, let $H=\set{0,p,q,r,s,1}$ and \[
\nu=\bquos{\set{\pair p q, \pair q r, \pair r q, \pair r s}\cup (\set{0}\times H)\cup (H\times\set 1)}\text.
\]
(In general, the quasiordered set
$\tuple{H;\nu}$ is quite different from  $\tuple{H_1;\nu_1}$ and $\tuple{H_2;\nu_2}$.)
 Using that $S$ is a sublattice of the quasi-colored lattice $\alg L(H,\nu)$, see Lemma~\ref{keylemma} and  Definition~\ref{defLofquord}, it is easy to see that $\balpha_1\cup\balpha_2$ is a congruence of $S$. Namely, we can quite easily show that $\balpha_1\cup\balpha_2=\congen S{a_p,b_p}$. Clearly, $\congen S{a_p,b_p}$ collapses the $p$-colored edges. If it collapsed a $t$-colored edge for some $t\in\set{q,r,s,1}$ in $S$, then it would collapse the same edge (with the same color) in $
L(H,\nu)$, but then (C\ref{labqb}) would give $t\leq p$, a contradiction. 

In case of Figure~\ref{fighomopqrsupup}, let $\tuple{H;\nu}$ be the six element lattice in which there are exactly two maximal chains,  $\set{0\prec p\prec q\prec 1}$ and $\set{0\prec r\prec s\prec 1}$. The same argument as above shows that $\congen S{a_p,b_p}$ collapses the $p$-colored edges and only those, while $\congen S{a_r,b_r}$ collapses exactly the $r$-colored edges. To see that $\balpha_1\cup\balpha_2$ is a congruence, it suffices to show that $\balpha_1\cup\balpha_2=\congen S{a_p,b_p}\vee\congen S{a_r,b_r}$. Clearly, $\balpha_1\cup\balpha_2\subseteq \congen S{a_p,b_p}\vee\congen S{a_r,b_r}$. Assume that $\pair xy\in \covpairs S$ such that $\pair xy\in \congen S{a_p,b_p}\vee\congen S{a_r,b_r}$. In other words, $\congen S{x,y} \leq \congen S{a_p,b_p}\vee\congen S{a_r,b_r}$. Since a covering pair of a lattice always generates a join-irreducible congruence and the congruence lattice of a lattice is distributive, it follows that  
$\congen S{x,y} \leq \congen S{a_p,b_p}$ or $\congen S{x,y} \leq \congen S{a_r,b_r}$. Hence, $\pair xy\in \balpha_1$ or $\pair xy\in \balpha_2$, and we obtain the required inclusion, $\balpha_1\cup\balpha_2\supseteq \congen S{a_p,b_p}\vee\congen S{a_r,b_r}$.

For Figure~\ref{fighomogrc}, we use the same $\tuple{H;\nu}$ as for Figure~\ref{fighomopqr}, and practically the same argument shows that $\balpha_1\cup \balpha_2=\congen S{a_r,b_r}$.
\end{proof}

\begin{figure}[htc]
\centerline
{\includegraphics[scale=1.0]{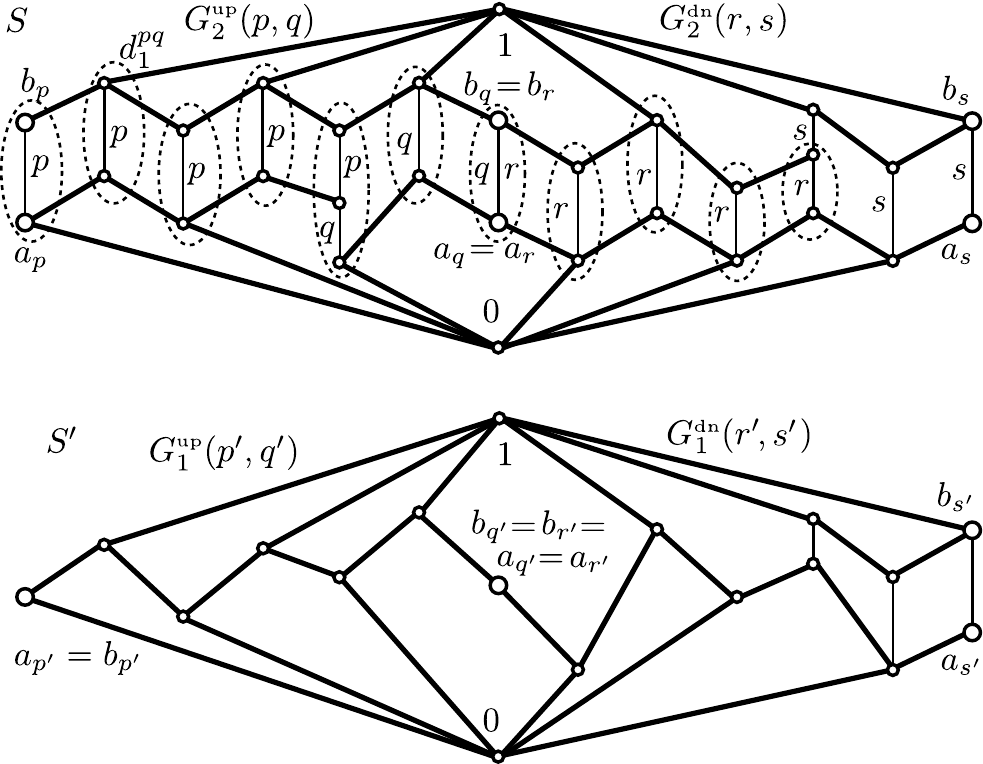}}
\caption{$\tuple{\text{up}, \text{dn}}$, $q=r$, and $\tuple{j,j',k,k'}=\tuple{2,0,2,1}$ \label{fighomogrc}} 
\end{figure}

\section{Completing the lattice theoretical part}\label{sectlatcompl}
For a quasiordered set $\tuple{H,\nu}$, we let $\Theta_\nu=\nu\cap\nu^{-1}$. It is known that $\Theta_\nu$  is an equivalence relation, and the definition 
\begin{equation}
\pair{\blokk x{\Theta_\nu}} {\blokk y{\Theta_\nu}} \in \nu/\Theta_\nu  \overset{\textup{def}}\iff \pair x y\in \nu
\end{equation}
turns the quotient set $H/\Theta_\nu$ into an ordered set $\tuple{H;\nu}/\Theta_\nu$, which is also denoted by $\tuple{H/\Theta_\nu;\nu/\Theta_\nu}$.
The following lemma is a straightforward consequence of (C\ref{labqa}) and (C\ref{labqb}), see \cite[Lemma 2.1]{czgprincc}, \cite[Lemma 3.1]{czgsingleinjectiveprinc}, or  \cite[Lemma \xilemma]{czginjlatcat}, where the inverse isomorphism is considered. Although the lemma is only formulated for the particular  quasi-colored lattices constructed in these papers, its easy proof makes it valid for every quasi-colored lattice, so it is time to formulate it more generally.

\begin{lemma}\label{lemmaQCOLisoPrinc}
For every quasi-colored lattice $\tuple{L,\leq; \gamma; H, \nu}$, $\princ L$ is isomorphic to $\tuple{H;\nu}/\Theta_\nu$ and the map $\tuple{\princ L; \subseteq} \to \tuple{H;\nu}/\Theta_\nu$, defined by $\cg xy\mapsto
\gamma(\pair xy)/\Theta_\nu$, is an order isomorphism.
\end{lemma}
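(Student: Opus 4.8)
The plan is to exhibit the stated assignment as a well-defined order isomorphism by reducing everything to a single biconditional supplied by (C\ref{labqa}) and (C\ref{labqb}). First I would record that every principal congruence of $L$ is generated by an \emph{ordered} pair: since $\cg ab=\cg{a\wedge b}{a\vee b}$ and $\pair{a\wedge b}{a\vee b}\in\pairs L$, each $\Theta\in\princ L$ equals $\cg xy$ for some $\pair xy\in\pairs L$, on which $\gamma$ is defined. Hence the formula $\varphi(\Theta):=\blokk{\gamma(\pair xy)}{\Theta_\nu}$ gives a candidate map $\varphi\colon\princ L\to\tuple{H;\nu}/\Theta_\nu$. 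Everything then rests on the observation that, for $\pair{x_1}{y_1},\pair{x_2}{y_2}\in\pairs L$,
\begin{equation*}
\cg{x_1}{y_1}\leq\cg{x_2}{y_2}\iff \gamma(\pair{x_1}{y_1})\leqnu\gamma(\pair{x_2}{y_2}),
\end{equation*}
where the forward implication is exactly (C\ref{labqb}) and the backward one is exactly (C\ref{labqa}).

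Next I would read off well-definedness and the order-embedding property from this equivalence. Applying it in both directions, $\cg{x_1}{y_1}=\cg{x_2}{y_2}$ holds iff both $\gamma(\pair{x_1}{y_1})\leqnu\gamma(\pair{x_2}{y_2})$ and $\gamma(\pair{x_2}{y_2})\leqnu\gamma(\pair{x_1}{y_1})$ hold, that is, iff $\pair{\gamma(\pair{x_1}{y_1})}{\gamma(\pair{x_2}{y_2})}\in\nu\cap\nu^{-1}=\Theta_\nu$, i.e.\ iff $\blokk{\gamma(\pair{x_1}{y_1})}{\Theta_\nu}=\blokk{\gamma(\pair{x_2}{y_2})}{\Theta_\nu}$. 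The forward direction here shows $\varphi$ is independent of the chosen ordered generating pair, so $\varphi$ is well defined. For the order properties I would invoke the definition of $\nu/\Theta_\nu$, by which $\blokk{\gamma(\pair{x_1}{y_1})}{\Theta_\nu}\leq\blokk{\gamma(\pair{x_2}{y_2})}{\Theta_\nu}$ in $\tuple{H;\nu}/\Theta_\nu$ iff $\gamma(\pair{x_1}{y_1})\leqnu\gamma(\pair{x_2}{y_2})$; combined with the displayed equivalence this yields
\begin{equation*}
\cg{x_1}{y_1}\leq\cg{x_2}{y_2}\iff \varphi(\cg{x_1}{y_1})\leq\varphi(\cg{x_2}{y_2}).
\end{equation*}
Thus $\varphi$ is simultaneously order-preserving and order-reflecting, hence an order embedding; taking the two inequalities together and using antisymmetry in $\princ L$ forces injectivity.

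Finally I would establish surjectivity from the surjectivity of $\gamma$: given any $\blokk h{\Theta_\nu}\in\tuple{H;\nu}/\Theta_\nu$, pick $\pair xy\in\pairs L$ with $\gamma(\pair xy)=h$, so that $\varphi(\cg xy)=\blokk h{\Theta_\nu}$. An order embedding that is onto is an order isomorphism, which is the assertion. I do not expect a genuine obstacle; the only points needing a little care are the reduction of arbitrary generators $\cg ab$ to ordered pairs and the correct passage from ``mutual $\leqnu$'' to equality of $\Theta_\nu$-classes. This is precisely why the lemma, previously stated only for the specific constructed lattices, holds verbatim for \emph{every} quasi-colored lattice.
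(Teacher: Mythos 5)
Your proposal is correct and follows essentially the same route the paper intends: the paper treats this lemma as a ``straightforward consequence of (C1) and (C2)'' (deferring details to cited references), and your argument---reducing arbitrary generators to ordered pairs via $\cg ab=\cg{a\wedge b}{a\vee b}$, combining (C1) and (C2) into a single biconditional, and then reading off well-definedness, the order-embedding property, and surjectivity from the surjectivity of $\gamma$---is precisely that straightforward argument written out in full.
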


As a consequence of this lemma and our construction,  or (the proof of) \cite[Lemma \xilemma]{czginjlatcat}, we obtain the following corollary.

\begin{corollary}\label{corolxilemma}
If $\tuple{H;\nu}$ is a quasiordered set satisfying \eqref{eqrHnuassum}, then 
the map
\[
\zeta_{H,\nu}\colon \tuple{H;\nu}/\Theta_\nu\to \tuple{\princ {\Nla(H,\nu)};\subseteq}
\]
defined by $\blokk p{\Theta_\nu} \mapsto\cg  {a_p}{b_p}$
is an order isomorphism.
\end{corollary}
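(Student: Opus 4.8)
The plan is to realize $\zeta_{H,\nu}$ as the inverse of the order isomorphism furnished by Lemma~\ref{lemmaQCOLisoPrinc}; this is what the phrase ``where the inverse isomorphism is considered'' above anticipates. The first thing I would check is that $\alg\Nla(H,\nu)$ is a bona fide quasi-colored lattice. By Definition~\ref{defLofquord} it equals $\alg\Nla(H,Z(H),U(H);\nu,\nu)$, and \eqref{eqrHnuassum} guarantees that the standing hypothesis \eqref{IJassmpTion} holds with $I=J=\nu$; hence Lemma~\ref{keylemma} applies and shows that $\alg\Nla(H,\nu)=\tuple{\Nla(H,\nu),\lambda_{H,\nu};\gamma_{H,\nu};H,\nu}$ is a quasi-colored lattice of length~5 whose quasiorder is exactly $\nu$.

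Next I would apply Lemma~\ref{lemmaQCOLisoPrinc} to this quasi-colored lattice. It yields an order isomorphism
\[
\xi\colon \tuple{\princ{\Nla(H,\nu)};\subseteq}\to \tuple{H;\nu}/\Theta_\nu,\qquad \cg xy\mapsto \gamma_{H,\nu}(\pair xy)/\Theta_\nu.
\]
Since the inverse of an order isomorphism is again an order isomorphism, it suffices to prove that $\zeta_{H,\nu}=\xi^{-1}$; establishing this will simultaneously show that $\zeta_{H,\nu}$ is well defined and that it is an order isomorphism, which is exactly the assertion of the corollary.

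To identify $\xi^{-1}$, I would compute $\xi(\cg{a_p}{b_p})=\gamma_{H,\nu}(\pair{a_p}{b_p})/\Theta_\nu$ for each $p\in H$. If $p\notin Z(H)$, then by the coloring convention \eqref{fiGrSgy} and the construction of $\Nla^-(H,Z(H),U(H);\emptyset,\emptyset)$ the pair $\pair{a_p}{b_p}$ is the thin edge labeled~$p$, so $\gamma_{H,\nu}(\pair{a_p}{b_p})=p$. If $p\in Z(H)$, then $a_p=b_p$, so $\pair{a_p}{b_p}$ is trivial and $\gamma_{H,\nu}(\pair{a_p}{b_p})=0$; but now $0$ and $p$ are both smallest elements of $\tuple{H;\nu}$, so $p\,\Theta_\nu\,0$ and the two colors lie in the same $\Theta_\nu$-block. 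In either case $\gamma_{H,\nu}(\pair{a_p}{b_p})/\Theta_\nu=\blokk p{\Theta_\nu}$, whence $\xi(\cg{a_p}{b_p})=\blokk p{\Theta_\nu}$. Because $\xi$ is a bijection, this forces $\xi^{-1}(\blokk p{\Theta_\nu})=\cg{a_p}{b_p}=\zeta_{H,\nu}(\blokk p{\Theta_\nu})$; as every element of $H/\Theta_\nu$ is of the form $\blokk p{\Theta_\nu}$, we conclude $\zeta_{H,\nu}=\xi^{-1}$.

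The only mildly delicate step is the color computation, and within it the case $p\in Z(H)$, where the edge $\pair{a_p}{b_p}$ degenerates and one must invoke $p\,\Theta_\nu\,0$ to see that its (zero) color still lands in the correct block; apart from this small piece of bookkeeping, the argument is a direct appeal to Lemmas~\ref{keylemma} and \ref{lemmaQCOLisoPrinc}, so I do not expect any real obstacle.
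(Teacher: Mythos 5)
Your overall strategy is exactly the paper's: the corollary is stated there as an immediate consequence of Lemma~\ref{lemmaQCOLisoPrinc} applied to the quasi-colored lattice $\alg\Nla(H,\nu)$ provided by Lemma~\ref{keylemma} and Definition~\ref{defLofquord}, and realizing $\zeta_{H,\nu}$ as the inverse of that isomorphism is the right way to make this precise (it also takes care of well-definedness, as you note). However, your color computation --- the step you yourself single out as the delicate one --- is wrong in one case. You claim that for every $p\notin Z(H)$ the pair $\pair{a_p}{b_p}$ is a thin edge labeled $p$, so that $\gamma_{H,\nu}(\pair{a_p}{b_p})=p$. This fails for $p\in U(H)$: in the construction of $\Nla(H,Z(H),U(H);\emptyset,\emptyset)$, for each $r\in U(H)$ a double gadget of rank $2$ is glued from $\pair{a_1'}{b_1'}$ to $\pair{a_r}{b_r}$, and by \eqref{eqrgeathick} all edges of these gadgets --- including $\pair{a_r}{b_r}$ itself --- become thick. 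Hence, by convention \eqref{fiGrSgy}, $\gamma_{H,\nu}(\pair{a_r}{b_r})=1$, not $r$. (Indeed, forcing these pairs to generate the largest congruence is the entire purpose of gluing those gadgets; if your claim were true, $\Nla^-$ would already be quasi-colored and the gadgets would be unnecessary.)

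The gap is repaired by the same bookkeeping you already did for $Z(H)$: if $p\in U(H)$, then $p$ and $1$ are both largest elements of $\tuple{H;\nu}$, so $p\,\Theta_\nu\,1$ and therefore $\gamma_{H,\nu}(\pair{a_p}{b_p})/\Theta_\nu=\blokk 1{\Theta_\nu}=\blokk p{\Theta_\nu}$ after all. So the correct trichotomy is: $p\in Z(H)$ (trivial pair, color $0$, and $0\,\Theta_\nu\,p$); $p\in U(H)$ (thick edge, color $1$, and $1\,\Theta_\nu\,p$); and $p\in H\setminus(Z(H)\cup U(H))$ (thin edge, color exactly $p$). In each case the color lands in $\blokk p{\Theta_\nu}$, so $\xi(\cg{a_p}{b_p})=\blokk p{\Theta_\nu}$ still holds for every $p\in H$, and the rest of your argument goes through unchanged.
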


\begin{proof}[Proof of Theorem~\ref{thmlat}]
Let $\posetfunct\colon \cat A\to \bposets$ be a faithful functor  as in the theorem, and let 
\[
\cat B:=\posetfunct(\cat A)\text.
\] 
For $X\in \obj{\cat A}$ and $f\in\mor{\cat A}$,  $\posetfunct(X)$ is an ordered set and $\posetfunct(f)$ is a monotone map; we will use the notation 
\[
\tuple{\psf X; \leq_X} := \posetfunct(X)
\,\text{ and }\, \psf f := \posetfunct(f) 
\text.
\]
In $\cat B $, two ordered sets with the same underlying set but different orderings are two distinct objects.   Since we do not want to identify distinct objects when we forget their orderings, we index the underlying sets as follows. For $\tuple{Y;\nu}\in \obj{\cat B }$, we let $\fofunct(\tuple{Y;\nu}):=Y\times\set{\nu}$. 
For $g\in \fmor(\tuple{Y_1;\nu_1}, \tuple{Y_2;\nu_2})$, we let
\[
g'=\fofunct(g)\colon Y_1\times \set{\nu_1}\to Y_2\times \set{\nu_2},\text{ defined by }\pair x{\nu_1}\mapsto \pair{g(x)}{\nu_2}\text.
\]
In this way, we have defined a  totally faithful functor $\fofunct\colon \cat B  \to \secatt$; the subscript comes from ``forgetful''. 
For $\tuple{X;\nu}$ and $x\in X$, if  $\nu$ is understood, we often write $X'$ and  $x'$ instead of $X\times\set{\nu}$ and   $\pair x\nu$. With this abbreviation, 
\begin{equation}
g'=\fofunct(g)\colon Y_1'\to Y_2'\text{ is defined by }x'\mapsto (g(x))'\text.
\label{eqrgprimedF}
\end{equation}
That is, for $X\in\cat A$,  $f\in\mor{\cat A}$, and $y\in \psf X$, 
\[\psv X = \fofunct(\posetfunct(X)) ,\,\text{ }\psv f=\fofunct(\posetfunct(f)),\,\text{ and }\,\psv f(y')=(\psf f(y))'  \text.
\]
The image 
\[
\cat C:=\fofunct(\cat B ) =(\fofunct\circ \posetfunct)(\cat A)
\] is a small concrete category, a subcategory of $\secatt$; its objects and morphisms are the $\psv X$ for $X\in \obj{\cat A}$ and the $\psv f$ for $f\in\mor{\cat A}$, respectively.
We claim that 
\begin{equation}
\text{all morphisms of }\cat C\text{ are monomorphisms.}
\label{eqrallmorinDmono}
\end{equation}
Since $\posetfunct$ is assumed to be faithful and $\fofunct$ is obviously faithful, \eqref{eqrallmorinDmono} will follow from the following trivial observation. 
\begin{equation}
\parbox{7.6cm}{If $F\colon \cat U\to\cat V$ is a faithful function, $\cat V=F(\cat U)$, and $f_1\in\mor{\cat U}$ is a monomorphism, then $F(f_1)$ is a monomorphism in $\cat V$.}
\label{eqrmonoffmono}
\end{equation}
To show this, assume that  $f_1\in \fmor_{\cat U}(X,Y)$ is a monomorphism and $f_2,f_3\in \fmor_{\cat U}(Z,X)$ such that $F(f_1)\circ F(f_2)=F(f_1)\circ F(f_3)$. Then
$F(f_1\circ f_2)=F(f_1\circ f_3)$, which implies $f_1\circ f_2=f_1\circ f_3$ since $F$ is faithful. Hence, $f_2=f_3$ and $F(f_2)=F(f_3)$. This proves \eqref{eqrmonoffmono} and, consequently, \eqref{eqrallmorinDmono}.

Although $\psv X=  \fofunct(\tuple{\psf X;\leq_X})=\fofunct(\posetfunct(X))$ is
only a set for $X\in\obj{\cat A}$, we shell use the ordering $\leq_X'$ induced by $\leq_X$ on it as follows: for $x,y\in\psf X$, 
\begin{equation}
\pair x\nu\leq_X'\pair y\nu 
\overset{\textup{def}}\iff x\leq_X y\text{, that is, }x'\leq_X' y' \overset{\textup{def}}\iff x\leq_X y
\text.
\label{eqrleqXprime}
\end{equation}
As a consequence of  \eqref{eqrleqXprime}, we have that
\begin{equation}
 0' \text{ resp.\ }1'\text{ are is the least resp.\  greatest element of }\tuple{\psv X,\leq_X'}\text. 
\label{eqrzvevR}
\end{equation}

Next, we let  
\[
{\cat D}:=\celfunct(\cat C)\text.
\]
By \eqref{eqrallmorinDmono} and Theorem~\ref{thmcat}, 
\begin{equation}
\text{all morphisms of } {\cat D} \text{ are injective maps.}
\label{eqrInCelallinj}
\end{equation}
(This is why we can apply  Lemma~\ref{lemmahomomain} soon.)
Since we have three functors already, it is worth defining their composite,
\begin{equation*}
\compfunct:= \celfunct\circ \fofunct \circ \posetfunct,\quad\text{from } \cat A\text{ to } {\cat D}\text.
\end{equation*}

Next, for $X\in \obj{\cat A}$, we define a relation $\nu_X$ on the set $\compfunct(X)=\celfunct(\psv X)$ as follows:  for eligible triplets $c_1, c_2\in \compfunct(X)=\celfunct(\psv X)$, 
\begin{equation}
\pair{c_1}{c_2}\in \nu_X \overset{\textup{def}}\iff \xpceltransf{\psv X}{c_1} \leq_X' \xpceltransf{\psv X}{c_1}  \text.
\label{eqrefnuprimedef}
\end{equation}
Clearly, $\nu_X'$ is a quasiorder.
The set of least elements of $\tuple{\compfunct(X);\nu_X}$ will be denoted by $Z(\compfunct(X))$. Similarly, $U(\compfunct(X))$ will stand for the set of largest elements. \eqref{eqrzvevR} and   \eqref{eqrefnuprimedef} make it clear that 
\begin{equation}
\begin{aligned}
Z(\compfunct(X))&=\set{c\in \compfunct(X): \xpceltransf{\psv X}{c}=0'}\text{, and} 
\cr
U(\compfunct(X))&=\set{c\in \compfunct(X): \xpceltransf{\psv X}{c}=1'}\text.\cr
\end{aligned}
\label{eqrefZandU}
\end{equation}
It also follows from \eqref{eqrefnuprimedef} that these sets are nonempty, because
\begin{equation*}
\begin{aligned}
&\trivcom(0')=\tuple{1_{\psv X},0',0'}\in Z(\compfunct(X))\text{, and}
\cr
&\trivcom(1')=\tuple{1_{\psv X},1',1'}\in U(\compfunct(X))\text.
\end{aligned}
\end{equation*}
Since $0\neq 1$ in $\posetfunct(X)$,  the distinguished eligible triplets  
\begin{equation}
\trivcom(0')\in  Z(\compfunct(X))\text{ and }\trivcom(1')\in  U(\compfunct(X))\text{ are distinct.}
\label{eqrdcmtDistinct}
\end{equation}
Hence, for $X\in \obj{\cat A}$, Definition~\ref{defLofquord} allows us to consider the quasi-colored lattice
\begin{equation}
\begin{aligned}
\alg \Nla(&\compfunct(X),\nu_X)=\cr
&\tuple{ \Nla(\compfunct(X),\nu_X), \lambda_{\compfunct(X),\nu_X}; \gamma_{\compfunct(X),\nu_X}; \compfunct(X),\nu_X}\text.  
\end{aligned}
\label{eqrefKHnudef}
\end{equation}
For $f\in\mor{\cat A}$, $\psv f=(\fofunct\circ \posetfunct)(f)$ and 
$\compfunct(f)$ are only maps  between two sets. However, \eqref{eqrleqXprime} and \eqref{eqrefnuprimedef}, respectively, allow us to guess that these maps are monotone; these properties are conveniently formulated in the form  \eqref{eqrfgfofpomonontone} below and \eqref{erismonotmap} later. 
First, we claim that  for $X,Y\in\obj{\cat A}$ and $f\in\fmor_{\cat A}(X,Y)$, 
\begin{equation} 
\psv f= (\fofunct\circ\posetfunct)(f)
\colon\tuple{\psv X;\leq_X'}\to \tuple{\psv Y;\leq_Y'}
\text{ is a monotone map.}
\label{eqrfgfofpomonontone}
\end{equation}
To show this, assume that $x_1,x_2\in \psf X$ such that $x_1'\leq_X' x_2'$. By \eqref{eqrleqXprime}, $x_1\leq_X x_2$. Since $\psf f=\posetfunct(f)$ is a monotone map, $\psf f(x_1)\leq_Y \psf f(x_2)$. Hence, by \eqref{eqrleqXprime}, we have that $(\psf f(x_1))'\leq_Y' (\psf f(x_2))'$. Thus, applying \eqref{eqrgprimedF} for $\psf f$ and $x_i$, 
\begin{equation*}
\psv f (x_1')
=(\psf f(x_1))'  
  \leq_Y' 
(\psf f(x_2))' = 
\psv f(x_2'),
\end{equation*}
which proves \eqref{eqrfgfofpomonontone}.

Second, we are going to show that for $X,Y\in\obj{\cat A}$ and $f\in\fmor_{\cat A}(X,Y)$,
\begin{equation}
\begin{aligned}
\compfunct(f)\colon \tuple{\compfunct(X);\nu_X}\to \tuple{\compfunct(Y);\nu_Y} 
\text{ is a monotone map.}
\end{aligned}
\label{erismonotmap}
\end{equation}
So let $X,Y\in\obj{\cat A}$ and $f\in\fmor_{\cat A}(X,Y)$. 
Since $\celtransf$ is a natural transformation by Theorem~\ref{thmcat},
the diagram
\begin{equation*}
\begin{CD}
\compfunct(X)=\celfunct(\psv X)  @>{\compfunct(f)}>> \compfunct(Y)=\celfunct(\psv Y) \\
@V{\xceltransf{\psv X}}VV   @V{\xceltransf{\psv Y}}VV   \\
\psv X @>{\psv f}>> \psv Y
\end{CD}
\end{equation*}
commutes. That is, for every $c\in \compfunct(X)$, 
\begin{equation}
\xpceltransf{\psv Y}{\compfunct( f)(c)}= \psv f(\xpceltransf{\psv X}{c})\text.
\label{eqrHoComitmaps}
\end{equation}
Assume that $\pair{c_1}{c_2}\in \nu_X$.  By \eqref{eqrefnuprimedef}, $\xpceltransf{\psv X}{c_1}\leq_X'\xpceltransf{\psv X}{c_2}$. By \eqref{eqrfgfofpomonontone}, this gives that 
$\psv f(\xpceltransf{\psv X}{c_1}) \leq_Y' \psv f(\xpceltransf{\psv X}{c_2})$. Combining this inequality with \eqref{eqrefnuprimedef} and \eqref{eqrHoComitmaps}, we obtain that $\pair{\compfunct(f)(c_1)}{\compfunct(f)(c_2)}\in \nu_Y$, proving \eqref{erismonotmap}.

Our next task is to show that 
\begin{equation}
\begin{aligned}
\compfunct(f)(Z(\compfunct(X)))&\subseteq Z(\compfunct(Y))\text{ and }\cr
\compfunct(f)(U(\compfunct(X)))&\subseteq U(\compfunct(Y))\text.
\end{aligned}
\label{eqrcompfunctpresZU}
\end{equation}
Assume that $c\in Z(\compfunct(X))$.  By \eqref{eqrefZandU}, $\xpceltransf{\psv X}{c}=0'$. Since $\psf f= \posetfunct(f)\in \mor{\cat B}\subseteq \mor{\bposets}$, $\psf f$ is $0$-preserving. Hence, by  \eqref{eqrgprimedF} and \eqref{eqrHoComitmaps}, 
\[\xpceltransf{\psv Y}{\compfunct(f)(c)} =  \psv f(\xpceltransf{\psv X}{c})=\psv f(0') = (\psf f(0))' = 0'\text.
\]
By \eqref{eqrefZandU}, this means that $\compfunct(f)(c_3)\in Z(\compfunct(Y))$.  This proves 
the first half of \eqref{eqrcompfunctpresZU}; the second half follows in the same way.

Now,  we are in the position to define a functor $\liftfunct \colon \cat A\to \latasdf$ as follows. For $X\in \obj{\cat A}$ and $f\in \fmor_{\cat A}(X,Y)\subseteq \mor{\cat A}$, we let
\begin{equation}
\begin{aligned}
\liftfunct(X)&:= \Nla(\compfunct(X);\nu_X),\text{ see \eqref{eqrefKHnudef},}\cr
\liftfunct(f)&:= \text{the unique $\set{0,1}$-preserving lattice homomorphism}\cr
&\kern 17pt \text{that Lemma~\ref{lemmahomomain} associates with }\compfunct(f);
\end{aligned}
\label{eqrliftFuDef}
\end{equation}
it follows from \eqref{eqrInCelallinj}, \eqref{eqrdcmtDistinct},  \eqref{erismonotmap}, and \eqref{eqrcompfunctpresZU} that Lemma~\ref{lemmahomomain} is applicable. We are going to show that  $\liftfunct$ is a functor from $\cat A$ to $\latasdf$.
By Lemma~\ref{keylemma}, Definition~\ref{defLofquord}, and   \eqref{eqrefKHnudef}, we have that $\liftfunct(X)\in \obj{\latasdf}$. 
By Lemma~\ref{lemmahomomain}, $\compfunct(f)\in\mor{\latasdf}$. If $f=1_X\in \fmor_{\cat A}(X,X)$, then $\compfunct(f)$ is the identity map since $\compfunct$ is a functor, and it follows from \eqref{eqrefgprops} and the uniqueness part of Lemma~\ref{lemmahomomain} that $\liftfunct(f)$ is the identity map $1_{\liftfunct(X)}$. Finally, assume that $X,Y,Z\in \obj{\cat A}$, $f_1\in\fmor_{\cat A}(X,Y)$, and $f_2\in\fmor_{\cat A}(Y,Z)$.   
We have to show that $\liftfunct(f_1\circ f_2)=\liftfunct(f_1)\circ \liftfunct(f_2)$.  By \eqref{eqrefgprops} and the uniqueness part of Lemma~\ref{lemmahomomain}, it suffices to show that 
\begin{equation}
\liftfunct(f_1\circ f_2)(a_p)=(\liftfunct(f_1)\circ\liftfunct(f_2))(a_p)
\label{eqrwhatapshould}
\end{equation}
for all eligible triplets $p\in \compfunct(X)$, and similarly for $b_p$. 
By \eqref{eqrefgprops} and  \eqref{eqrliftFuDef}, we have the following rule of computation:
\begin{equation}
\liftfunct(f)(a_p)=a_{\compfunct(f)(p)}\text.
\label{eqrrulecompap}
\end{equation}
We know that $\compfunct$, as composite of three functors, is a functor. Therefore, $\compfunct(f_1\circ f_2)= \compfunct(f_1)\circ  \compfunct(f_2)$. Using this equality and \eqref{eqrrulecompap}, we have 
\begin{align*}
\liftfunct(f_1\circ f_2)(a_p)&=a_{\compfunct(f_1\circ f_2)(p)} =
a_{(\compfunct(f_1)\circ \compfunct(f_2))(p)} \cr
&=a_{\compfunct(f_1)( \compfunct(f_2))(p))} =\liftfunct(f_1)( a_{\compfunct(f_2)(p)})\cr
&= \liftfunct(f_1)(\liftfunct(f_2)(a_p)) =
(\liftfunct(f_1)\circ \liftfunct(f_2))(a_p)
\text.
\end{align*}
Thus, \eqref{eqrwhatapshould} holds, and $\liftfunct\colon\cat A\to\latasdf$ is a functor, as required.

Clearly, the composite of faithful or totally faithful functors is a  faithful or totally faithful functor, respectively.  By Theorem~\ref{thmcat}, $\celfunct$ is totally faithful. 
So is $\fofunct$. Therefore, 
$\compfunct=\celfunct\circ \fofunct \circ \posetfunct$ is faithful, and it is totally faithful if so is  $\posetfunct$. Hence, it follows from \eqref{eqrrulecompap} that $\liftfunct$ is faithful. Furthermore, if $\posetfunct$ is totally faithful and $X\neq Y\in\obj{\cat A}$, then the same property of $\compfunct$ gives that $\set{a_p: p\in \compfunct(X)}$
is distinct from $\set{a_p: p\in \compfunct(Y)}$. Hence, it follows from Remark~\ref{remstrongfaithfconstr} and  
\eqref{eqrliftFuDef} that $\liftfunct(X)\neq \liftfunct(Y)$. Consequently, $\liftfunct$ is totally faithful if so is $\posetfunct$. 

Finally, we are going to prove that $\liftfunct$ lifts $\posetfunct$ with respect to $\fprinc$. The isomorphism provided by Corollary~\ref{corolxilemma} will be denoted by $\zeta_X$. That is, 
\begin{equation}
\begin{aligned}
\zeta_X\colon \tuple{\compfunct(X);\nu_X} /\Theta_{\nu_X}\to{} &\tuple{\princ{\Nla(\compfunct(X),\nu_X)};\subseteq}\cr
&\overset{\textup{\eqref{eqrliftFuDef}}}=(\fprinc \circ \liftfunct)(X),\cr
&\kern-100 pt \text{defined by }q/\Theta_{\nu_X}\mapsto \cg {a_q}{b_q},
\label{eqrzetaXdef}
\end{aligned}
\end{equation}
is an order isomorphism. 
The map  $\xceltransf X\colon\tuple{\compfunct(X);\nu_X}\to \tuple{\psv X;\leq_X'}$ from Definition~\ref{defcomet} is monotone by \eqref{eqrefnuprimedef}. This map is surjective, because   
$\xpceltransf{\psv X}{\trivcom(p')}=p'$ holds for every $p\in \psf X$, that is, $p'\in \psv X$. Furthermore, if $p'\leq_X' q'$, then $\pair{\trivcom(p')}{\trivcom(q')}\in \nu_X$ by \eqref{eqrefnuprimedef}, which means that the ordering $\leq_X'$ equals 
the $\xceltransf X$-image of $\nu_X$.
Hence, using a well-known fact about orders induced by quasiorders,  the map
\[
\tuple{\compfunct(X);\nu_X} /\Theta_{\nu_X}\to \tuple{\psv X;\leq_X'},\,\text{ defined by  }\, q/\Theta_{\nu_X}\mapsto  \xpceltransf{\psv X} q,
\]
is an order isomorphism. So is its inverse map,
\[\tuple{\psv X;\leq_X'}\to  \tuple{\compfunct(X);\nu_X} /\Theta_{\nu_X}\,\text{ defined by  }\, p'\mapsto \trivcom(p')/\Theta_{\nu_X}\text.
\]
Since $\tuple{\psf X;\leq_X}\to \tuple{\psv X;\leq_X'}$, defined by $x \mapsto x'$, is also an isomorphism by \eqref{eqrleqXprime}, the composite 
\begin{equation}
\xi_X\colon \tuple{\psf X;\leq_X}\to \tuple{\compfunct(X);\nu_X} /\Theta_{\nu_X},
\text{ defined by }p\mapsto \trivcom(p')/\Theta_{\nu_X},
\label{eqrxiXdef}
\end{equation}
of the two isomorphisms is also an order isomorphism. So we can let\begin{equation}
\xtautr X:= \zeta_X\circ\xi_X,
\text { which is an order isomorphism}
\label{eqrcelltrisiso}
\end{equation}
from $\posetfunct(X)=\tuple{\psf X;\leq_X}$ to $(\fprinc\circ\liftfunct)(X)$ by  \eqref{eqrzetaXdef} and \eqref{eqrxiXdef}. As the last part of the proof, we are going to show that
 $\tautr\colon\posetfunct\to \fprinc\circ\liftfunct$ is a natural isomorphism. By \eqref{eqrcelltrisiso}, we only have to show that it is a natural transformation. To do so,  assume that $X,Y\in\obj{\cat A}$ and $f\in\fmor_{\cat A}(X,Y)$.
Besides $\psf f=\posetfunct(f)$ and $\psv f=(\fofunct\circ \posetfunct)(f)$,
we will use the notation  $h:=(\fprinc\circ\liftfunct)(f)$. 
We have to show that the diagram
\begin{equation}
\begin{CD}
\posetfunct(X)=\tuple{\psf X;\leq_X}  @>{\bdiakern\cordiakern\psf f=\posetfunct(f)\cordiakern\bdiakern}>> \posetfunct(Y)=\tuple{\psf X;\leq_Y}  \\
@V{\xtautr X }VV  @V{\xtautr Y   }VV   \\
 (\fprinc\circ\liftfunct)(X) @>{\bdiakern h=(\fprinc\circ\liftfunct)(f)\bdiakern}>> (\fprinc\circ\liftfunct)(Y)
\end{CD}
\label{CDliftnattrf}
\end{equation}
commutes.
First, we investigate the map $h$. For a triplet $q\in \compfunct(X)$, we have that $\liftfunct(f)(a_q)= a_{\compfunct(f)(q)}$ by \eqref{eqrrulecompap}. Analogously, $\liftfunct(f)(b_q)= b_{\compfunct(f)(q)}$. Therefore, applying the definition of $\fprinc$ for the $\set{0,1}$-lattice homomorphism $\liftfunct(f)\colon \liftfunct(X)\to \liftfunct(Y)$, see \eqref{eqrefsljGfB} and \eqref{eqrfuncPrincDf}, 
we have that
\begin{equation}
h(\cg{a_q}{b_q})= \cg{a_{\compfunct(f)(q)}}
{b_{\compfunct(f)(q)}}\text.
\label{eqrhofcmputE}
\end{equation}
Consider an arbitrary $p\in \posetfunct(X)$. By \eqref{eqrzetaXdef}, \eqref{eqrxiXdef}, and \eqref{eqrcelltrisiso}, 
\begin{equation}
\xtautr X(p)= \zeta_X(\xi_X(p))
)= \zeta_X(\trivcom(p')/\Theta_{\nu_X}) = \cg{a_{\trivcom(p')}}{b_{\trivcom(p')}}\text.
\label{eqrHowcltracTs}
\end{equation}
Hence, \eqref{eqrhofcmputE} yields that
\begin{equation}
(h\circ\xtautr X)(p) =  \cg{a_{\compfunct(f)(\trivcom(p'))}}
{b_{\compfunct(f)(\trivcom(p'))}}\text.
\label{eqrcmprghTdn}
\end{equation}
On the other hand, using \eqref{eqrHowcltracTs} for $Y$ and $\psf f(p)$ instead of $X$ and $p$, 
\begin{equation}
(\xtautr Y\circ \psf f)(p) =  \xtautr Y(\psf f(p))= \cg{a_{\trivcom(\psf f(p)')}}{b_{\trivcom(\psf f (p)')}}\text.
\label{eqrdnrgTcomp}
\end{equation}
We are going to verify that  \eqref{eqrcmprghTdn} and \eqref{eqrdnrgTcomp} give the same principal congruences.
 Motivated by (C\ref{labqa}), we focus on the colors of the  respective  ordered pairs that generate these two principal congruences. By the construction of our quasi-colored lattices, see Figure~\ref{fig-m33}  and \eqref{eqrgammahzuIJ}, these colors are 
$c_1:=\compfunct(f)(\trivcom(p'))$, in \eqref{eqrcmprghTdn},  and  $c_2:=\trivcom(\psf f(p)')$, in \eqref{eqrdnrgTcomp}. By \eqref{eqrtrrtrainv},   \eqref{eqrgprimedF},  and   \eqref{eqrHoComitmaps}, 
\begin{align*}
\xpceltransf{\psv Y}{c_1} &= \xpceltransf{\psv Y}{\compfunct(f)(\trivcom(p'))} 
 = \psv f(\xpceltransf{\psv X}{\trivcom(p')}) \cr
& =\psv f(p') = \psf f (p)'
= \xpceltransf{\psv Y}{\trivcom(\psf f(p)')} = 
\xpceltransf{\psv Y}{c_2}\text.
\end{align*}
Hence,   \eqref{eqrefnuprimedef} yields that  $\pair{c_1}{c_2}\in \nu_Y$ and $\pair{c_2}{c_1}\in \nu_Y$. Thus, we conclude from (C\ref{labqa}) that \eqref{eqrcmprghTdn} and \eqref{eqrdnrgTcomp} are the same principal congruences, which means that the diagram given in \eqref{CDliftnattrf} commutes. This proves that $\liftfunct$ lifts $\posetfunct$ with respect to $\fprinc$, as required.  
\end{proof}

\end{document}